\numberwithin{equation}{section}
\numberwithin{enumi}{equation}
\theoremstyle{plain}
\newtheorem{theorem}{Theorem}[section]
\newtheorem{lemma}[theorem]{Lemma}
\newtheorem{proposition}[theorem]{Proposition}
\newtheorem{corollary}[theorem]{Corollary}
\newtheorem*{conjecture*}{Conjecture}
\newtheorem*{theorem*}{Theorem \thmain}
\newtheorem*{lemma*}{Lemma}
\newtheorem{theoremintro}{Theorem}
\theoremstyle{remark}
\newtheorem{remark}[theorem]{Remark}
\DeclareMathOperator{\End}{End}
\DeclareMathOperator{\aut}{Aut}
\DeclareMathOperator{\gal}{Gal}
\DeclareMathOperator{\gsp}{GSp}
\DeclareMathOperator{\gl}{GL}
\DeclareMathOperator{\su}{SU}
\DeclareMathOperator{\gu}{GU}
\DeclareMathOperator{\Lie}{Lie}
\DeclareMathOperator{\lef}{Lef}
\DeclareMathOperator{\mt}{MT}
\DeclareMathOperator{\mat}{Mat}
\DeclareMathOperator{\jac}{Jac}
\global\let\sp\undefined
\DeclareMathOperator{\sp}{Sp}
\def\isotropic{I}
\def\anisotropic{J}
\def\ad{{\rm ad}}
\def\res{{\rm\bf R}}
\def\der{{\rm der}}
\def\units{^\cross}
\def\an{{\rm an}}
\def\ELL{{\mathbb L}}
\def\idp{{\mathfrak p}}
\def\idq{{\mathfrak q}}
\def\ff{{\mathbb F}}
\def\cx{{\mathbb C}}
\def\nat{{\mathbb N}}
\def\rat{{\mathbb Q}}
\def\integ{{\mathbb Z}}
\def\calo{{\mathcal O}}
\def\tensor{\otimes}
\def\cross{\times}
\def\ra{\rightarrow}
\def\inject{\hookrightarrow}
\def\ndiv{{\nmid}}
\def\inv{^{-1}}
\def\twiddle{\sim}
\def\iso{\cong}
\newcommand{\til}[1]{{\widetilde{#1}}}
\newcommand{\rest}[1]{|_{#1}}
\newcommand{\abs}[1]{{\left|#1\right|}}
\newcommand{\st}[1]{\{#1\}}
\newcommand{\ang}[1]{{{\langle #1 \rangle}}}
\newcommand{\oneover}[1]{\frac{1}{#1}}
\newenvironment{alphabetize}{\begin{enumerate}
\def\theenumi{\alph{enumi}}
}{\end{enumerate}}
\newenvironment{romanize}{\begin{enumerate}
\def\theenumi{\roman{enumi}}
}{\end{enumerate}}
\begin{document}
\title{Split reductions of simple abelian varieties}
\author{Jeffrey D. Achter}
\email{j.achter@colostate.edu}
\address{Department of Mathematics\\Colorado State University\\Fort
 Collins, CO 80523-1874}
\urladdr{http://www.math.colostate.edu/$\sim$achter}
\thanks{The author was partially supported by NSA grant H98230-08-1-0051.}

\begin{abstract}
Consider an absolutely simple abelian variety $X$ over a number field
$K$.  We show that if the absolute endomorphism ring of $X$ is
commutative and satisfies certain parity conditions, then $X_\idp$
is absolutely simple for almost all primes $\idp$.  Conversely, if the
absolute endomorphism ring of $X$ is noncommutative, then $X_\idp$ is
reducible for $\idp$ in a set of positive 
density.
\end{abstract}

\maketitle

An absolutely simple abelian variety over a number field may or may
not have absolutely simple reduction almost everywhere.  On one hand, let $K =
\rat(\zeta_5)$, and let $X$ be the Jacobian of the hyperelliptic curve with affine model 
\[
t^2 = s(s-1)(s-1-\zeta_5)(s-1-\zeta_5-\zeta_5^2)(s-1-\zeta_5-\zeta_5^2-\zeta_5^3),
\]
considered as an abelian surface over $K$.  Then
$X$ is absolutely simple \cite[p.648]{igusa60} and has ordinary
reduction at 
a set of primes $\idp$ of density one \cite[Prop.\ 1.13]{ibukiyamakatsuraoort}; at such primes
$X_\idp$ is absolutely simple.

On the other hand, let $Y$ be the Jacobian of the
hyperelliptic curve with affine model
\[
t^2 = s^6 - 12s^5 + 9s^4 - 32s^3 + 3s^2 +18s + 3,
\]
considered as an abelian surface over $L = \rat(\sqrt{2},\sqrt{-3})$.
Then $Y$ is absolutely simple \cite[Thm.\ 6.1]{bending}, but $Y_\idq$ is reducible
for each prime $\idq$ of good reduction.  (The conclusions about the
simplicity of $X_\idp$ and the reducibility of $Y_\idq$ follow from
Tate's description \cite{tateendff} of the endomorphism rings of
abelian varieties over finite fields.)

Note that $\End_K(X) \tensor\rat$ is the cyclotomic field
$\rat(\zeta_5)$, while $\End_L(Y)\tensor\rat$ is an indefinite
quaternion algebra over $\rat$.  Murty and Patankar study
the splitting behavior of abelian varieties over number fields, 
and advance the following
conjecture:
\begin{conjecture*}\cite[Conj.\ 5.1]{murtypatankar} \label{conjmp}
Let $X/K$ be an absolutely simple abelian variety over a number field.
The set of primes of $K$ where $X$ splits has positive density if and
only if $\End_{\bar K}(X)$ is noncommutative.
\end{conjecture*}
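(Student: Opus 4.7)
The plan is to address the two implications separately; the noncommutative direction, which recovers positive-density reducibility from noncommutativity, is the easier one, while the commutative direction is where the conjecture is genuinely hard.

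As a preliminary reduction, I would enlarge $K$ to a finite extension $K'/K$ over which every element of $\End_{\bar K}(X)$ is defined, noting that since every prime of $K'$ restricts to a prime of $K$, a positive- (resp.\ density-one-) density statement for primes of $K'$ produces one for primes of $K$. Write $D = \End_{K'}(X)\tensor\rat$; by absolute simplicity of $X$, this is a division algebra with center a number field $F$ and $\dim_F D = d^2$.

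For the noncommutative direction, suppose $d\ge 2$. The $D$-action on the $\ell$-adic Tate module $V_\ell(X)$ forces the image of the Galois representation to lie in the centralizer of $D$ inside $\gl(V_\ell(X))$. Choose a rational prime $\ell$ at which $D\tensor\rat_\ell$ splits as a matrix algebra, which happens for a positive-density set of $\ell$ by Chebotarev applied to the splitting field of the Brauer class of $D$. At such $\ell$, $V_\ell(X) \iso W^d$ as a Galois-module, so the characteristic polynomial of Frobenius at any prime $\idp$ of $K'$ above $\ell$ with good reduction is a perfect $d$-th power. Tate's theorem on isogeny classes of abelian varieties over finite fields then forces each simple isogeny factor of $X_\idp$ to occur with multiplicity at least $d\ge 2$, so $X_\idp$ is not simple.

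For the commutative direction, suppose $D = E$ is a field. By Tate, absolute simplicity of $X_\idp$ is equivalent to $\rat[\pi_\idp]$ being a field of degree $2\dim X$ over $\rat$, where $\pi_\idp$ is geometric Frobenius. I would show the failure set has density zero by combining two inputs. First, by Faltings together with work of Serre, the image of Galois on $V_\ell(X)$ is an open subgroup of the centralizer of $E$ inside the Mumford--Tate group of $X$ tensored with $\rat_\ell$; call this reductive group $G_\ell$. Second, the subset of $G_\ell$ consisting of elements whose characteristic polynomial is reducible or generates a proper subfield of the expected degree is a proper Zariski-closed subvariety, hence has Haar measure zero. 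Chebotarev, applied to the Galois image, then forces only a zero-density set of Frobenius conjugacy classes to land there.

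The main obstacle is the second input in the commutative case. Depending on the Albert type of $E$, the group $G_\ell$ is a form of a classical group --- symplectic, orthogonal, or unitary --- and one must rule out that \emph{every} regular semisimple element has characteristic polynomial with forced internal symmetries (coming from a real structure, a pairing, or a CM conjugation) that would collapse $\rat[\pi_\idp]$ to a proper subfield. This is precisely where the parity conditions mentioned in the abstract enter: they single out the Hodge data for which the generic characteristic polynomial is as irreducible as possible. For certain orthogonal or unitary types, the forced symmetries can already prevent $\rat[\pi_\idp]$ from attaining full degree $2\dim X$, and identifying the correct parity constraint --- and handling the types it excludes --- is the central technical challenge, suggesting that the conjecture as stated may require some refinement in those cases.
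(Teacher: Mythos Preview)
First, note that this statement is recorded in the paper as a \emph{conjecture}, not a theorem: the paper establishes only the noncommutative implication in full (Theorem~B(i), Proposition~5.1) and the commutative implication under additional parity/signature hypotheses (Theorem~A). So your proposal should be read as an outline toward the full conjecture, and you rightly flag the commutative direction as the hard one.

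Your noncommutative argument matches the paper's route (Lemma~3.2 and Proposition~5.1), with one slip: over a general finite field a simple abelian variety can have noncommutative endomorphism ring, so from $f_\idp(t)=g(t)^d$ one only deduces $d\mid e_j\cdot d(Y_j)$ for each simple factor $Y_j^{e_j}$ of $X_\idp$, not $e_j\ge d$. The paper repairs this by restricting to primes $\idp$ of residue degree one, where every simple $Y_j$ has $d(Y_j)=1$; since such primes have density one, the conclusion survives.

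In the commutative direction there is a genuine error. The set of elements of $G_\ell(\rat_\ell)$ with characteristic polynomial reducible over $\rat_\ell$ is \emph{not} a proper Zariski-closed subvariety and does \emph{not} have Haar measure zero: reducibility over a non-algebraically-closed field is not an algebraic condition, and already in $\gl_2(\integ_\ell)$ a positive proportion of elements have split characteristic polynomial. Hence equidistribution at a single $\ell$ cannot force the bad set of primes to have density zero. The paper's mechanism is different: it works modulo $\ell$, shows (Lemma~1.1) that the proportion of elements of the finite group $H_{X/K,\ell}$ acting reducibly on $X_\ell$ is at most some $C<1$ uniform in $\ell$, and then exploits independence of the mod-$\ell$ representations at distinct $\ell$ (Goursat, Lemma~1.3) so that the bad set has density at most $C^r$ for arbitrary $r$. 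You also assert openness of the Galois image in the Mumford--Tate group as though it were known; that is the Mumford--Tate conjecture, and the paper instead invokes case-by-case theorems (Banaszak et al., Vasiu, Larsen) that supply enough of it under the stated parity hypotheses.
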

(A similar question has been raised by Kowalski; see \cite[Rem.\ 3.9]{kowalskiweil}.)
The present paper proves this conjecture under certain parity and
signature conditions on $\End(X)$.

The first main result states that a member of a large class of abelian
varieties with commutative endomorphism ring has absolutely simple
reduction almost everywhere.  (Throughout this paper, ``almost
everywhere'' means for a set of primes of density one.)
\begin{theoremintro}
\label{thmaincommutative}
Let $X/K$ be an absolutely simple abelian variety over a number
field.  Suppose that either
\begin{romanize}
\item $\End_{\bar K}(X) \tensor\rat \iso F$  a totally real field,
  and $\dim X/[F:\rat]$ is odd; or
\item $\End_{\bar K}(X) \tensor\rat\iso E$ a totally imaginary field,
  and the action of $E$ on $X$ is not special.
\end{romanize}
Then for almost every prime $\idp$, $X_\idp$ is absolutely simple.
\end{theoremintro}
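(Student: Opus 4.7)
The plan is to recast the problem as a density statement for Frobenius conjugacy classes in the $\ell$-adic monodromy group, and then to verify it by a construction inside the appropriate algebraic group that uses the parity/non-specialness hypothesis.

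By Tate's theorem (together with Honda--Tate), $X_\idp$ is absolutely simple if and only if, for every integer $m\geq 1$, the characteristic polynomial of $\pi_\idp^m$ on the Tate module $V_\ell X_\idp$ is irreducible over $\rat$; equivalently, $\rat(\pi_\idp^m)$ is a CM field of degree $2\dim X$. So it suffices to show that for every $m$, the set of primes where this fails has density zero. Let $G_\ell$ denote the Zariski closure of the image of $\rho_\ell\colon \gal(\bar K/K)\ra\gl(V_\ell X)$. By Chebotarev, applied to the possibly disconnected group $G_\ell$, the density-zero statement follows once one shows that, for each $m$, the locus
\[
B_m = \st{\gamma\in G_\ell : \text{char.\ poly.\ of } \gamma^m \text{ on } V_\ell X \text{ is reducible over } \rat}
\]
is a proper closed subscheme of $G_\ell$.

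To analyze $B_m$, I would first pin down $G_\ell^\circ$ using the Mumford--Tate conjecture in these endomorphism-restricted cases, known through work of Pink, Chi, Banaszak--Gajda--Kraso\'n, and others. In case (i), $G_\ell^\circ$ is, up to center, $\res_{(F\tensor\rat_\ell)/\rat_\ell}\sp_{2n}$ where $n=\dim X/[F:\rat]$; in case (ii) it is the analogous restriction of scalars of a unitary group attached to $E$, with the non-specialness hypothesis guaranteeing that this unitary group is as large as possible. After extending scalars, the representation on $V_\ell X$ decomposes as a sum, indexed by the embeddings of $F$ (resp.\ of the maximal totally real subfield $E^+$), of standard symplectic (resp.\ unitary) representations.

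The heart of the proof is the construction of a single element $\gamma$ in the image of Galois whose characteristic polynomial on $V_\ell X$ is $\rat$-irreducible; once such a $\gamma$ exists, $B_1$ is proper, and an openness argument reduces the general $B_m$ to an analogous statement for a finite-index subgroup. Two nested transitivities are needed: the Galois action on the geometric factors of $G_\ell^\circ$ must be transitive, and the Weyl action on eigenvalues inside one factor must, in combination with that outer permutation, produce a single orbit of size $2\dim X$. In case (i) the Weyl group of $\sp_{2n}$ permutes the pairs $\st{\lambda, q/\lambda}$ and flips them; combining this with a cyclic Galois twist on the factors gives an irreducible characteristic polynomial precisely when $n$ is odd. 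For even $n$ the analogous construction is obstructed by a sign invariant arising from the Weyl-and-Galois combinatorics, and this is exactly the failure mode excluded by the parity hypothesis. The analogous obstruction in case (ii) is excluded by non-specialness. Producing this $\gamma$---equivalently, proving that the image of Galois is rich enough to realize the full Weyl-times-Galois combinatorics---is where I expect the main obstacle to lie, and is precisely where both the parity hypothesis in (i) and the non-specialness hypothesis in (ii) play their decisive roles.
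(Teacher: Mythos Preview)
Your single-$\ell$ strategy has a genuine gap that cannot be repaired without changing the architecture of the argument. The locus
\[
B_m=\{\gamma\in G_\ell:\text{char.\ poly.\ of }\gamma^m\text{ is reducible}\}
\]
is \emph{not} a proper Zariski-closed subscheme of $G_\ell$. Reducibility over $\rat$ is not even well-defined for general $\gamma$ (the coefficients lie only in $\rat_\ell$), and reducibility over $\rat_\ell$ is an arithmetic, not algebraic, condition. Already for $\sp_2=\mathrm{SL}_2$ over $\rat_\ell$, the set $B_1$ is $\{\gamma:\mathrm{tr}(\gamma)^2-4\text{ is a square in }\rat_\ell\}$, which is open of positive Haar measure in $\mathrm{SL}_2(\integ_\ell)$; its complement likewise has positive measure. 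Thus $\ell$-adic equidistribution at a single prime can never push the density of bad Frobenii below a fixed positive constant. This is precisely why the paper works mod $\ell$ at \emph{many} primes simultaneously: one shows a uniform bound $\abs{\isotropic_\ell(H_\ell)}/\abs{H_\ell}<C<1$ for the proportion of elements failing to lie in the regular locus of a maximally anisotropic torus, proves independence across distinct $\ell$ via Goursat's lemma, and then classical Chebotarev gives density $<C^r\to 0$.

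You have also misdiagnosed where the hypotheses enter. In the paper, the oddness of $r=\dim X/[F:\rat]$ in case~(i) and the non-specialness in case~(ii) are used solely to invoke known cases of the Mumford--Tate conjecture (Banaszak--Gajda--Kraso\'n for type~I with $r$ odd; Vasiu for type~IV non-special) so that one \emph{knows} the Galois image is large. They are not needed for the group-theoretic step: for the symplectic group $\res_{\calo_F/\integ}\gsp_{2r}$ at a prime $\ell$ inert in $F$, the maximally anisotropic torus $T^{\an}_\ell$ acts irreducibly on the standard representation for \emph{every} $r$, so there is no Weyl-combinatorial obstruction when $r$ is even. Your remark that the construction ``gives an irreducible characteristic polynomial precisely when $n$ is odd'' is therefore incorrect; what fails for even $n$ is the cited image computation, not the existence of an irreducibly acting element.
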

(The notion of ``not special'' is discussed in Section
\ref{subseccommutative}; it is satisfied if, for instance, $\dim X$ is prime.)
Conversely, the second main result shows that abelian varieties with
noncommutative endomorphism ring have split reduction at a set of primes of
positive density.

\begin{theoremintro}
\label{thmainnoncommutative}
Suppose $X/K$ is an absolutely simple abelian variety over a number
field, and that $\End_{\bar K}(X)$ is noncommutative.
\begin{romanize}
\item For $\idp$ in a set of positive density, $X_\idp$ is
  absolutely reducible.
\item Suppose $\End_{\bar K}(X)\tensor\rat$ is
  an indefinite quaternion algebra over a totally real field $F$, and
  that $\dim X/2[F:\rat]$ is odd.  For $\idp$ in a set of positive
  density, $X_\idp$ is geometrically isogenous to the self-product of
  an absolutely simple abelian variety. 
\end{romanize}
\end{theoremintro}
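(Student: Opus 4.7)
The plan is to use the injection $D := \End^0_{\bar K}(X) \inject \End^0(X_\idp)$ at each prime of good reduction, combined with Honda--Tate and Chebotarev density, to force reducibility of $X_\idp$ at a positive-density set of primes and, under the hypothesis of (ii), to pin down the $Y^2$-structure. After a harmless finite base change, I may assume $D = \End^0_K(X)$. Let $F$ be the center of $D$ and let $d \geq 2$ be its Schur index, with $d = 2$ in case (ii). By Chebotarev applied to a splitting field of $[D] \in \text{Br}(F)$, a positive density of rational primes $\ell$ split completely in $F$ and split $D$ at every place above $\ell$. For such $\ell$, the $F$-symmetric structure of $H_1(X,\rat)$ as a $D$-module yields a uniform decomposition $V_\ell X = \bigoplus_{\lambda \mid \ell} W_\lambda^m$ with $W_\lambda$ the standard simple $M_d(\rat_\ell)$-module at $\lambda$ and $m = 2g/(d[F:\rat])$. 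Any Frobenius $\pi_\idp$ (at an $\idp$ of good reduction above a rational prime other than $\ell$) commutes with $D \tensor \rat_\ell$, hence acts on each $W_\lambda^m$ by an $m \times m$ matrix over $\rat_\ell$, giving
\[
P_\idp(T) \;=\; \prod_{\lambda \mid \ell}\, p_{\idp,\lambda}(T)^d \qquad \text{in } \rat_\ell[T],
\]
with each $p_{\idp,\lambda}$ of degree $m$. Consequently $P_\idp$ is a $d$-th power in $\rat_\ell[T]$, and every $\rat$-irreducible factor of $P_\idp$ must occur with multiplicity divisible by $d$.

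For part (i), if $X_\idp$ were simple, Honda--Tate gives $P_\idp = h_\idp^{e_\idp}$ for an irreducible $h_\idp \in \rat[T]$; combined with the constraint above, this forces $d \mid e_\idp$ and (by dimension count) $\End^0(X_\idp) \iso D \tensor_F \rat(\pi_\idp)$. So the Honda--Tate local Brauer invariants of $\End^0(X_\idp)$ would have to match those of $D$ base-changed to $\rat(\pi_\idp)$ at every place. Equidistribution of Frobenius (Chebotarev, combined with Faltings to control the commutant of $D$ in $\gl(V_\ell X)$) shows that for a positive density of $\idp$ the Newton polygon of $X_\idp$ is generic enough that these invariants disagree at the residue-characteristic places; hence $X_\idp$ is reducible on a positive-density set.

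For part (ii), $d = 2$ makes $P_\idp = q_\idp^2$ for some $q_\idp \in \rat[T]$ of degree $g$, and Chebotarev yields $q_\idp$ irreducible for a positive density of $\idp$. Honda--Tate leaves two alternatives: $X_\idp \sim Y^2$ with $Y$ simple of dimension $g/2$ (case A), or $X_\idp$ is itself simple of dimension $g$ with $\End^0(X_\idp)$ a quaternion division algebra over the CM field $\rat(\pi_\idp)$ (case B). Case B forces the local Brauer invariants of $\End^0(X_\idp)$ at places above the residue characteristic to be $\tfrac{1}{2}$, which via Honda--Tate translates into a non-ordinary Newton polygon. The parity hypothesis $\dim X / 2[F:\rat]$ odd, together with known density results for ordinary primes of abelian varieties carrying indefinite quaternion multiplication (Noot, Ogus, and their generalizations for the relevant quaternionic Shimura datum), excludes case B on a density-one set. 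Intersecting with the $q_\idp$-irreducibility set then gives $X_\idp \sim Y^2$ with $Y$ simple for a positive density of $\idp$.

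The hard part is the Chebotarev step: pinpointing the Zariski closure of the $\ell$-adic monodromy via Faltings and the known cases of the Mumford--Tate conjecture for noncommutative-$\End$ abelian varieties, and — most delicately in part (ii) — using the parity hypothesis to extract the positive density of ordinary primes that rules out the exceptional quaternionic simple factor. The parity condition enters precisely because it controls whether the Brauer invariants that case B would demand are compatible with an $F$-linear Hodge structure on the characteristic-zero lift.
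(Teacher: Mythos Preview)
Your setup through the decomposition $P_\idp = q_\idp^d$ is essentially the paper's Lemma \ref{lemgalrepsplits} (via Lefschetz groups) and Lemma \ref{lemnewRall}(a). After that, however, both parts diverge from the paper and contain genuine gaps.

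\textbf{Part (i).} Your Brauer-invariant argument is unsubstantiated. The isomorphism $\End^0(X_\idp) \cong D \otimes_F \rat(\pi_\idp)$ does not follow from $d \mid e_\idp$ and a dimension count: $F$ need not lie in the center $\rat(\pi_\idp)$ of $\End^0(X_\idp)$, and in any case the inclusion $D \hookrightarrow \End^0(X_\idp)$ does not by itself pin down the latter. The subsequent appeal to ``equidistribution of Frobenius'' showing the Newton polygon is ``generic enough that these invariants disagree'' is a placeholder, not an argument. The paper bypasses all of this with a one-line observation (Lemma \ref{lemnewRall}(b)): a simple abelian variety over a \emph{prime} field $\ff_p$ has commutative endomorphism ring, so $d(Y_j)=1$ and every exponent $e_j$ in the Honda--Tate decomposition is divisible by $d\geq 2$. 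Restricting to $\idp$ of residue degree one over $K'$ (density $\geq 1/[K':K]$) then forces reducibility immediately.

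\textbf{Part (ii).} You invoke ``known density results for ordinary primes'' (Noot, Ogus) to exclude case B, and locate the parity hypothesis in a Brauer/Hodge compatibility at the residue characteristic. Neither is what the paper does, and the ordinary-density results you need are not available in this generality. In the paper, the oddness of $\dim X/2[F:\rat]$ enters only through \cite[Thm.\ B]{banaszaketal06}, which computes the mod-$\ell$ monodromy as (of type) $\res_{\calo_F/\integ}\gsp_{2r}(\integ/\ell)$ under exactly that parity condition. With the monodromy in hand, the anisotropic-torus count (Lemmas \ref{lemcexists} and \ref{lemgoursat}) shows that $q_\idp$ is irreducible for a density-one set of $\idp'$ over $K'$; case B is then excluded by the same prime-residue-field trick as in part (i), with absolute simplicity of $Y$ coming from condition \eqref{eqfieldcond}. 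No ordinary-reduction input is needed.
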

Moreover, there is a finite extension of $K$ such that the set of
primes $\idp$ in Theorem \ref{thmainnoncommutative} actually has density one. 

Special cases of these results are already known.  The  case of
Theorem \ref{thmaincommutative}(i) in which $\End(X) \iso \integ$ is
due to Chavdarov \cite[Cor.\ 6.10]{chavdarov}; see also the related
work of Chai and Oort \cite{chaioort01}.  An abelian surface over a
finite field with noncommutative endomorphism ring is absolutely
reducible \cite[p.261]{murtypatankar05}, and the special case of
Theorem \ref{thmainnoncommutative}(i) in which $F = \rat$ and $\dim X
= 2$ is apparently well-known \cite[Cor.\ 2]{adimoolam}.  More recently, Murty
and Patankar have shown that if $X$ is either an abelian variety of CM
type \cite[Thm.\ 3.1]{murtypatankar}, or a modular abelian variety
with commutative absolute endomorphism ring \cite[Thm.\
4.1]{murtypatankar}, then $X$ has simple reduction almost everywhere.

Ellenberg et al.\ \cite{ellenbergetal} have addressed a
related problem for families of abelian varieties over a number
field.  Specifically, they consider the relative Jacobian of a family
of hyperelliptic curves $y^2=f(x)(x-t)$ over $K[t]$, and show that for
all but finitely many specializations of $t$ the resulting abelian
variety is simple.

The proof of Theorem \ref{thmainnoncommutative} uses the fact that, if
$\End_{\bar K}(X)$ is noncommutative, then the Tate module
$T_\ell(X)$ is a direct sum of copies of the same representation of
$\gal(K)$.  This, in turn, follows from the fact \cite{milnelefschetz}
that the first homology of $X$, as a representation of the Lefschetz
group, is isotypic but not irreducible.

The proof of Theorem \ref{thmaincommutative} is more involved, and
uses the Chebotarev 
theorem and the observation that if the Frobenius at $\idp$ acts irreducibly
on the $\ell$-torsion for some $\ell$, then $X_\idp$ is simple.  
This approach was used by Chavdarov in \cite[Cor.\ 
6.10]{chavdarov} in the special case where the image of $\gal(K)$, acting on each
$X_\ell$, is the group of symplectic similitudes
$\gsp_{2g}(\integ/\ell)$.   We give a more detailed outline of this
strategy in the following
example, which gives a quick  
proof of a special (but typical) case of \cite[Thm.\
3.1]{murtypatankar}.  Let $E$ be a totally imaginary extension of
$\rat$ of degree $2g$.  Suppose that $X/K$ is an absolutely simple
$g$-dimensional abelian variety with $\End_K(X) = \End_{\bar K}(X) \iso
\calo_E$.  Further suppose that the CM type of $E$ is nondegenerate in
the sense of \cite{kubotacm}.  For each rational prime $\ell$ there
are Galois representations $\rho_{X/K,\integ_\ell}: \gal(K) \ra
\aut(T_\ell(X)) \iso \gl_{2g}(\integ_\ell)$ and $\rho_{X/K,\ell}:\gal(K)
\ra \aut(X_\ell) \iso \gl_{2g}(\integ/\ell)$.  There is a set of
rational primes $\ELL$ (containing all but finitely many primes)
such that if
$\ell_1, \cdots, \ell_r$ are distinct primes in $\ELL$, then the image of $\gal(K)$
under the product representation $\cross_{1 \le i \le r}\rho_{X/K,\ell_i}$ is $\cross_{1 \le i \le r}
(\calo_E\tensor\integ/\ell_i)\units$ (e.g., \cite{ribet80}).

Let $\ell$ be any prime at which $E$ is inert, so that
$(\calo_E\tensor \integ/\ell)\units \iso \ff_{\ell^{2g}}\units$.  Let
$\isotropic_\ell$ be the set of elements of $\ff_{\ell^{2g}}\units$ which are
members of some proper subfield of $\ff_{\ell^{2g}}$.  Note that if
$\ff_{\ell^{2g}}$ is considered as a vector space over
$\integ/\ell$, then elements of $\isotropic_\ell$ are precisely the 
elements of $\ff_{\ell^{2g}}\units$ which acts reducibly on $\ff_{\ell^{2g}}$.
 There exists a
constant $C < 1$ such that for all $\ell$ inert in $E$, we have 
$\abs{\isotropic_\ell}/\abs{\ff_{\ell^{2g}}\units} < C$.

Let $M(X/K)$ be the set of (finite)  primes of $K$ where $X$ has good
reduction, and let $R(X/K)$ be the set of primes $\idp$ of good reduction
for which $X_\idp$ is reducible.
Suppose $\idp\in M(X/K)$, and let
$\sigma_\idp \in \gal(K)$ be a Frobenius element at $\idp$.  Let $\ell$ be a
rational prime relatively prime to $\idp$.  The Frobenius
endomorphism of $X_\idp$ acts as
$\rho_{X/K,\integ_\ell}(\sigma_\idp)$ on $T_\ell(X_\idp) \iso T_\ell(X)$.  If $X_\idp$ is
not simple, then the $\gal(\kappa(\idp))$-module $T_\ell(X_\idp)$ is
reducible, and in particular $\rho_{X/K,\ell}(\sigma_\idp)$ acts reducibly
on $X_{\idp,\ell}:=  X_\idp[\ell](\overline{\kappa(\idp)})$.  Therefore, if there exists one prime
$\ell$ such that $\rho_{X/K,\ell}(\sigma_\idp)$ acts irreducibly on
$X_{\idp,\ell}$, then $X_\idp$ is simple.

So, let $\ell_1, \cdots, \ell_r$ be distinct primes in $\ELL$ at which $E$ is
inert.  Let $R(X/K;\ell_1, \cdots, \ell_r)$ $\subset M(X/K)$ be the set
of primes $\idp$ such that for each $1 \le i \le r$,
$\rho_{X/K,\ell_i}(\sigma_\idp) \in \isotropic_{\ell_i}$. Then $R(X/K) \subseteq
R(X/K;\ell_1, \cdots, \ell_r)$.  By the Chebotarev theorem, the
density of $R(X/K;\ell_1, \cdots, \ell_r)$ is $\prod_{i=1}^r
\abs{\isotropic_{\ell_i}}/\abs{\ff_{\ell_i^{2g}}\units} < C^r$.  Since $C<1$
and we may take an arbitrarily large set of rational primes inert in $E$, the
density of $R(X/K)$ is zero, and the density of its complement is
therefore one.

Generalizing this argument to other abelian varieties with commutative
absolute endomorphism ring requires calculating the image of the
Galois representations $\rho_{X/K,\integ_\ell}$, which is
conjecturally described by the Mumford-Tate conjecture (see
Section \ref{subseclefschetz}); showing that a positive proportion
of elements of $\rho_{X/K,\ell}(\gal(K))$ act irreducibly on the Tate
module (Section \ref{subsecgroup}); and axiomatizing the foregoing
argument (Section \ref{subsecabvar}).

Quite recently, Banaszak et al.\ have extended the methods of
\cite{banaszaketal06} to abelian varieties of type III, which allows
an extension of Theorem \ref{thindefquat} to the case of definite
quaternion algebras.  Also, Zywina points out that sieve methods
(e.g., those of \cite{zywinathesis}) can be used to make
the density one statements in Section \ref{subseccommutative} more
explicit.  I will explain both of these developments in detail elsewhere.

\section{Groups of Lie type}
\label{subsecgroup}

If $B$ is a finite $A$-algebra, let $\res_{B/A}$ denote
Weil's restriction of scalars functor.

Group schemes $G/\integ[1/\Delta]$ of the following forms arise as the
images of Galois representations considered here: 
\begin{enumerate}\def\theenumi{\Alph{enumi}}
\item
\label{caseimag} There exist a totally imaginary field $E$ with
  maximal totally real subfield $F$; an $\calo_E[1/\Delta]$-module
  $V$ which is free of rank $2r$ over $\calo_F[1/\Delta]$; and an
  $\calo_E[1/\Delta]$-Hermitian pairing $\ang{\cdot,\cdot}$ on $V$;
  such that $G$ is the Weil restriction $G = \res_{\calo_E[1/\Delta]/\integ[1/\Delta]} \gu(V,  \ang{\cdot,\cdot})$.    Let $Z = E$.

\addtocounter{enumi}{1}
\item\label{casereal}
 There exist a totally real field $F$; a free
  $\calo_F[1/\Delta]$-module $V$ of rank $2r$; and an
  $\calo_F[1/\Delta]$-linear symplectic pairing $\ang{\cdot,\cdot}$ on
  $V$;
  such that $G = \res_{\calo_F[1/\Delta]/\integ[1/\Delta]}
  \gsp(V,\ang{\cdot,\cdot})$.  Let $Z =F$.

\end{enumerate}
The center $ZG$ of $G$ satisfies $ZG(\integ[1/\Delta])
\iso \calo_Z[1/\Delta]\units$.    The adjoint form of $G$ is $G^\ad :=
G/ZG$.  For each $\ell$ inert in $Z$, let
$T_\ell^\an\subset G(\integ/\ell)$ be a maximally anisotropic maximal 
torus.   

In case \eqref{caseimag}, the derived group of $G$ is $G^\der =
  \res_{\calo_E[1/\Delta]/\integ[1/\Delta]} \su(V,
  \ang{\cdot,\cdot})$.
Note that $G(\integ/\ell) \iso \gu(V\tensor \calo_E/\ell, \ang{\cdot,\cdot})$,
and $G^\der(\integ/\ell)\iso \su(V\tensor \calo_E/\ell,
\ang{\cdot,\cdot})$.   In particular, if $\ell$ is a rational prime
inert in $E$, then $G^\der(\integ/\ell) \iso \su_r(\calo_E/\ell)$.
Moreover, if $r$ is odd, then $T^\an_\ell$ acts irreducibly on
$V\tensor\integ/\ell$; while if $r$ is even, then $T^\an_\ell$ stabilizes two
subspaces which are in duality with each other.

In case \eqref{casereal}, the derived
  group of $G$ is $G^\der  = \res_{\calo_F[1/\Delta]/\integ[1/\Delta]}
  \sp(V,\ang{\cdot,\cdot})$.
Note that $G(\integ/\ell) \iso \gsp(V\tensor \calo_F/\ell, \ang{\cdot,\cdot})$
  and $G^\der(\integ/\ell)\iso \sp(V\tensor \calo_F/\ell,
  \ang{\cdot,\cdot})$.  In particular, if $\ell$ is a rational prime
  inert in $F$, then $G^\der(\integ/\ell) \iso
  \sp_{2r}(\calo_F/\ell)$.    Moreover, $T^\an_\ell$ acts irreducibly
on $V\tensor\integ/\ell$.

Let $G$ be a group scheme over $\integ[1/\Delta]$.  For a rational
prime $\ell\ndiv \Delta$, let $\anisotropic_\ell(G)$ be the set of all
$x \in G(\integ/\ell)$ for which the connected component of the
centralizer of $x$ is a torus which is maximally anistropic.
Let
$\isotropic_\ell(G)$ be the complement $G(\integ/\ell) -
\anisotropic_\ell(G)$.  Let $\anisotropic_{\ell,m}(G)$ be the set of
$x$ such that $x^m \in \anisotropic_\ell(G)$, and let
$\isotropic_{\ell,m}(G)$ be its complement.  Each of these sets is 
stable under conjugation.  
For $G$ of type \eqref{caseimag} or \eqref{casereal}, $x \in
\anisotropic_\ell(G)$ if and only if $x$ is $G(\integ/\ell)$-conjugate
to a regular element of $T_\ell^\an$.

Say that an
abstract group $H_\ell$ is of type $G(\integ/\ell)$ if there are
inclusions $G^\der(\integ/\ell) \subseteq H_\ell \subseteq
G(\integ/\ell)$.  For such a group $H_\ell$, let $\isotropic_{\ell,m}(H_\ell) = H_\ell
\cap \isotropic_{\ell,m}(G)$, and let $\anisotropic_{\ell,m}(H_\ell) = H_\ell \cap
\anisotropic_{\ell,m}(G)$.

\begin{lemma}
\label{lemcexists}
Suppose $G/\integ[1/\Delta]$ is a group of type \eqref{caseimag},
or \eqref{casereal}, and let $m$ be a natural number.
There exists a constant $C = C(m,G)$ such that if $\ell$ is inert in
$Z$ and sufficiently large, and if $H_\ell$ is of type
$G(\integ/\ell)$, then $\abs{\isotropic_{\ell,m}(H_\ell)}/\abs{H_\ell}
< C$.
\end{lemma}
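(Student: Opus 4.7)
The plan is to lower bound $|\anisotropic_{\ell,m}(H_\ell)|/|H_\ell|$ by a positive constant depending only on $G$, uniformly in $H_\ell$ and for all inert $\ell$ sufficiently large; the complement then gives the required $C < 1$. Concretely, I would aim to show
\[
\frac{|\anisotropic_{\ell,m}(H_\ell)|}{|H_\ell|} \;\ge\; \frac{1}{2|W^\an|},
\]
where $|W^\an| := |N_G(T_\ell^\an)/T_\ell^\an|$ is the order of the relative Weyl group, and then take $C = 1 - 1/(2|W^\an|)$. The starting observation is that if $x \in \anisotropic_{\ell,m}(H_\ell)$, so that $x^m$ is regular in some maximally anisotropic torus $T'$, then $x$ itself must lie in $T'$: indeed $x$ commutes with $x^m$, and $C_G(x^m) = T'$ because $x^m$ is regular semisimple. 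Using the characterization recalled just before the lemma---that every anisotropic element is $G(\integ/\ell)$-conjugate to a regular element of $T_\ell^\an$---one obtains the disjoint decomposition
\[
\anisotropic_{\ell,m}(H_\ell) \;=\; \bigsqcup_{T'} \st{x \in T' \cap H_\ell : x^m \text{ is regular in } T'},
\]
where $T'$ ranges over the $G(\integ/\ell)$-conjugates of $T_\ell^\an$.

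The first step is to evaluate $\sum_{T'} |T' \cap H_\ell|$. Because $(G/G^\der)(\integ/\ell)$ is abelian, the subgroup $H_\ell$ is normal in $G(\integ/\ell)$, so $|T' \cap H_\ell| = |T_\ell^\an \cap H_\ell|$ is independent of $T'$. I would then verify by direct computation that the projection $T_\ell^\an \to G(\integ/\ell)/G^\der(\integ/\ell)$ is surjective---the similitude character in case~\eqref{casereal}, the determinant in case~\eqref{caseimag}. Combined with $G^\der(\integ/\ell) \subseteq H_\ell$, this gives $T_\ell^\an \cdot H_\ell = G(\integ/\ell)$ and hence $|T_\ell^\an \cap H_\ell| = |T_\ell^\an| \cdot |H_\ell|/|G(\integ/\ell)|$. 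Since the number of conjugates $T'$ equals $|G(\integ/\ell)|/(|T_\ell^\an| \cdot |W^\an|)$, this sums to
\[
\sum_{T'} |T' \cap H_\ell| \;=\; \frac{|H_\ell|}{|W^\an|}.
\]

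The second step is to bound the contribution of "bad" $x \in T' \cap H_\ell$ whose $m$-th power is not regular in $T'$. A nonregular element of $T'$ is a zero of some root $\alpha$ of $G$ relative to $T'$, so the bad set lies in $\bigcup_\alpha \ker(m\alpha) \cap (T' \cap H_\ell)$. For $\ell$ coprime to $m$, one has $|\ker(m\alpha) \cap (T' \cap H_\ell)| = |T' \cap H_\ell|/|\alpha(T' \cap H_\ell)|$, so it suffices to show that each nontrivial root has image of order at least linear in $\ell$ on $T' \cap H_\ell$. Since $T' \cap H_\ell \supseteq T' \cap G^\der(\integ/\ell)$, and the latter is a cyclic anisotropic torus of $G^\der$ of order polynomial in $\ell$ on which every root of $G$ is visibly nontrivial, the image does have order at least $c\ell$ for some $c = c(G) > 0$. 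Summing the resulting bound $O(|T' \cap H_\ell|/\ell)$ over $T'$ yields $O(|H_\ell|/(|W^\an|\,\ell))$, so $|\anisotropic_{\ell,m}(H_\ell)| \ge (|H_\ell|/|W^\an|)(1 - O(1/\ell))$, which exceeds $|H_\ell|/(2|W^\an|)$ for all sufficiently large $\ell$.

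The main technical obstacle is the pair of structural assertions about $T_\ell^\an$---that it surjects onto $G/G^\der$ modulo $\ell$, and that every nontrivial root of $G$ has image of order at least linear in $\ell$ on $T_\ell^\an \cap G^\der(\integ/\ell)$. Both require a case-by-case calculation in the explicit realizations of $T_\ell^\an$ (the Coxeter-type torus of $\sp_{2r}(\ff_{\ell^d})$ in case~\eqref{casereal}, and its analogue for $\gu_r$ in case~\eqref{caseimag}), but each reduces to inspecting the Galois action of Frobenius on the root lattice. Once these structural facts are in place, the elementary double counting above completes the argument.
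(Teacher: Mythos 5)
Your argument is correct in substance, and its core is the same double count as the paper's: $\anisotropic_{\ell,m}$ is a disjoint union, over the $G(\integ/\ell)$-conjugates $T'$ of $T_\ell^\an$, of the elements whose $m$-th power is regular in $T'$ (disjointness holding because a regular semisimple element determines its torus, and $x\in C_G(x^m)=T'$ --- note that this last equality uses connectedness of centralizers of semisimple elements, i.e.\ simple connectedness of $G^\der$, which does hold for $\sp$ and $\su$). Where you genuinely diverge is in the two supporting steps. First, the passage from $G(\integ/\ell)$ to an arbitrary $H_\ell$ of type $G(\integ/\ell)$: the paper goes through the adjoint group, loses a factor $n$ equal to the index of $H_\ell^\ad$ in $G^\ad(\integ/\ell)$, and must replace $m$ by $mn$; you instead use normality of $H_\ell$ together with surjectivity of $T_\ell^\an\ra G(\integ/\ell)/G^\der(\integ/\ell)$ --- which follows from Lang's theorem applied to the connected torus $T^\an\cap G^\der$, with no case analysis needed --- to get $\abs{T'\cap H_\ell}=\abs{T_\ell^\an}\,\abs{H_\ell}/\abs{G(\integ/\ell)}$ uniformly. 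This is tidier than the paper's route and yields a cleaner constant. Second, the count of regular elements in $T_\ell^\an$: the paper simply cites Fleischmann--Janiszczak for $\abs{T^*_\ell}/\abs{T^\an_\ell}>1-B/\ell$, whereas you bound the non-regular locus by hand via kernels of roots. That reduction is sound, but two details need repair: (i) the identity $\abs{\ker(m\alpha)\cap A}=\abs{A}/\abs{\alpha(A)}$ requires $m$ to be coprime to the order of $\alpha(A)$, not merely to $\ell$; the correct and still sufficient bound is $\abs{\st{a\in A:\alpha(a)^m=1}}\le m\abs{A}/\abs{\alpha(A)}$. (ii) The assertion that every root has image of order at least $c\,\ell$ on $T'\cap G^\der(\integ/\ell)$ is the actual crux and is exactly what the Fleischmann--Janiszczak citation packages; it is true, but verifying it for the order-$(q^r+1)$ torus in $\sp_{2r}(\ff_q)$ needs gcd estimates such as $\gcd(q^r+1,q^s-1)\le q^{\gcd(r,s)}+1\le q^{r/2}+1$ for $s<r$, and for $G$ of type \eqref{caseimag} with $r$ even the maximally anisotropic torus is not cyclic, so ``its analogue for $\gu_r$'' must be spelled out rather than asserted. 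Either carry out those computations or fall back on the polynomial count as the paper does; with that, your proof is complete.
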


\begin{proof}
First, for 
each $m \in \nat$ we
show the existence of a positive constant $D_0(m,G)$ such that for all
sufficiently large $\ell$ inert in $Z$, 
$\abs{\anisotropic_{\ell,m}(G)}/ \abs{G(\integ/\ell)} > D_0(m,G)$.  Subsequently,
we show how to deduce a uniform statement for all $H_\ell$ of type
$G(\integ/\ell)$.

Let $T^*_{\ell}$ be the set of regular elements of $T^\an_\ell$, and
let $T^*_{\ell,m}$ be the set of $x \in T^\an_\ell$ such that $x^m \in
T^*_\ell$.  There are monic polynomials $f$ and $f^*$ of the same
degree such that $\abs{T^\an_\ell} = f(\ell)$ and $\abs{T^*_\ell}=
f^*(\ell)$ \cite{fleischmannjaniszczak93}.  (In fact,
\cite{fleischmannjaniszczak93} works out the analogous polynomials for
$\abs{T^*_\ell \cap G'(\integ/\ell)}$, but the result for
$G(\integ/\ell)$ itself follows immediately.)  Therefore, there exists
a constant $B$ such that, if $\ell \gg 0$, then
$\abs{T^*_\ell}/\abs{T^\an_\ell} > 1 - B/\ell$.  By considering the
fibers of the $m^{th}$ power map, we see that
$\abs{T^*_{\ell,m}}/\abs{T^\an_\ell} > 1 - mB/\ell$.

An element of $G(\integ/\ell)$ is in $\anisotropic_{\ell,m}(G)$ if and
only if it is conjugate to an element of $T_{\ell,m}^*$.  
The normalizer $N_\ell = N_{G(\integ/\ell)}(T^\an_\ell)$ is an extension of a
  finite group $W$ by $T^\an_\ell$; the group $W$ depends on
  $G$, but not on $\ell$.  Moreover, $T^*_{\ell,m}$ is stable under the
  action of $N_\ell$.  We obtain the estimate
\begin{align*}
\frac{\abs{\anisotropic_{\ell,m}(G)}}{\abs{G(\integ/\ell)}} &= 
\oneover{\abs{G(\integ/\ell)}}\left(\frac{\abs{G(\integ/\ell)}}{\abs{N_\ell}}
\abs{T_{\ell,m}^*}
\right) \\
&= \oneover{\abs
  W}\frac{\abs{T_{\ell,m}^*}}{\abs{T^\an_\ell}} 
 > \oneover{\abs W}\left(1 - \frac{mB}{\ell}\right).
\end{align*}
This shows the existence of $D_0(m,G)$ with the desired properties.
Membership in 
$\anisotropic_{\ell,m}(G)$ is well-defined on cosets modulo the center of $G$.
Therefore, the proportion of elements of $G^\ad(\integ/\ell)$ which
are (represented by) elements whose 
$m^{th}$ power is maximally anisotropic is also at least $D_0(m,G)$.

Now let $H_\ell$ be any group of type $G(\integ/\ell)$, and let
$H_\ell^\ad  = H_\ell/(ZG(\integ/\ell)\cap H_\ell)$.  There is an
inclusion of groups $H_\ell^\ad \inject 
G^\ad(\integ/\ell)$, with cokernel a finite cyclic group whose order
$n$ divides the rank of $G$.

Suppose $x \in \anisotropic_{\ell,mn}(G)$.  The equivalence class of $x^n$ modulo
the center is represented by an element $h$ of $H_\ell$.  Moreover,
for such an $h$, $h^m \equiv x^{mn} \bmod ZG(\integ/\ell)$ is
maximally anisotropic modulo the center.  The elements of $G^\ad(\integ/\ell)$
which are maximally anisotropic give rise to at least $\oneover n D_0(mn,
G)\abs{G^\ad(\integ/\ell)}$ distinct maximally anisotropic elements of
$H^\ad_\ell$.  Let $D(m,G) = \min\st{\oneover n D_0(mn,G) : n|\text{rank}(G)}$.  Then one may take $1-D(m,G)$ for $C(m,G)$ in the statement of
Lemma \ref{lemcexists}.
\end{proof}

\begin{remark}
For groups of type \eqref{casereal}, the case $m=1$ and $H_\ell =
G(\integ/\ell)$ of Lemma
\ref{lemcexists} is proved
in \cite[Cor.\ 3.6]{chavdarov}.
\end{remark}

\begin{lemma}
\label{lemgoursat}
Let $G/\integ[1/\Delta]$ be a group.  Suppose that either $G$ is of
type \eqref{caseimag} with $r\ge 2$ or that $G$ is of type \eqref{casereal}.
Let $\ell_1, \cdots, \ell_m$ be distinct rational primes which are
inert in $Z$.  Let $H$ be a subgroup of $G^\der(\integ/(\prod
\ell_i))$ such that for each $i$, the 
composition  $H \inject G^\der(\integ/(\prod \ell_i)) \ra
G^\der(\integ/\ell_i)$ is surjective.  Then $H = G^\der(\integ/(\prod
\ell_i))$.
\end{lemma}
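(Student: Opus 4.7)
The plan is to prove the lemma by induction on $m$, with Goursat's lemma providing the inductive step. The base case $m=1$ is the hypothesis. For the inductive step, set $G_i := G^{\der}(\integ/\ell_i)$, and let $H'$ be the image of $H$ in $\prod_{i<m} G_i$. Since the composition $H \twoheadrightarrow H' \to G_j$ equals the hypothesized surjection $H \to G_j$ for each $j<m$, the inductive hypothesis gives $H' = \prod_{i<m} G_i$. Thus $H$ is a subgroup of $H' \times G_m$ surjecting onto each factor, and Goursat's lemma produces normal subgroups $N_1 \lhd H'$ and $N_2 \lhd G_m$ together with an isomorphism $\phi: H'/N_1 \iso G_m/N_2$ whose graph pulls back to $H$. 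It suffices to show that the common quotient $Q := H'/N_1 \iso G_m/N_2$ is trivial.

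Under the hypotheses, Section \ref{subsecgroup} identifies $G^{\der}(\integ/\ell)$, for each inert prime $\ell$ coprime to $\Delta$, with $\su_r(\ff_{\ell^{[E:\rat]}})$ in case \eqref{caseimag} (where $r \ge 2$) or with $\sp_{2r}(\ff_{\ell^{[F:\rat]}})$ in case \eqref{casereal}. After enlarging $\Delta$ to absorb finitely many small exceptional primes, each such $G^{\der}(\integ/\ell)$ is quasisimple: it is perfect and its unique non-abelian simple quotient is $G^{\der}(\integ/\ell)/Z(G^{\der}(\integ/\ell))$, namely $\mathrm{PSU}_r$ or $\mathrm{PSp}_{2r}$ over the corresponding finite field. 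In particular $G_m$, and therefore every quotient $Q$ of $G_m$, is perfect.

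Suppose for contradiction that $Q \ne 1$. Being a nontrivial perfect finite group, $Q$ admits a non-abelian simple quotient $S$. On the one hand, $S$ is a non-abelian simple quotient of $G_m$, so by quasisimplicity $S \iso G_m/Z(G_m)$. On the other hand, $S$ is a non-abelian simple quotient of $\prod_{i<m} G_i$; any simple quotient of a direct product factors through one of the projections, so $S$ is a non-abelian simple quotient of some $G_j$ with $j<m$, hence $S \iso G_j/Z(G_j)$. Thus $G_j/Z(G_j) \iso G_m/Z(G_m)$. But these are classical finite simple groups defined over fields of distinct residue characteristics $\ell_j \ne \ell_m$; their orders have different prime factorizations, so they cannot be isomorphic. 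This contradiction forces $Q = 1$, hence $N_1 = H'$, and consequently $H = H' \times G_m = \prod_i G_i$.

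The main technical step to verify carefully is the quasisimplicity and perfection of $G^{\der}(\integ/\ell)$ for all relevant $\ell$: this is classical for $\su_r$ and $\sp_{2r}$ over finite fields, but a small list of exceptional configurations (such as $\sp_4(\ff_2)$ or very small unitary cases where either $r=2$ or the ground field is tiny) needs to be excluded, which is harmless since those finitely many primes can be added to $\Delta$. The non-isomorphism of the simple quotients for different $\ell_i$ is then immediate from order formulas, which exhibit $\ell_i$ as a distinguished prime divisor of $|G_i/Z(G_i)|$.
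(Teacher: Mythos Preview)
Your proof is correct and follows essentially the same route as the paper. The paper's proof is a two-line citation: ``This is Goursat's lemma \cite[p.\ 793]{ribet76}; see also \cite[Prop.\ 5.1]{chavdarov}. The hypothesis guarantees that the adjoint groups $G^\ad(\integ/\ell_i)$ are distinct nonabelian simple groups.'' You have simply unpacked what Ribet's formulation requires (the induction, the common quotient from Goursat, and the non-isomorphism of the simple quotients over fields of different characteristic), and your caveat about excluding finitely many small primes by enlarging $\Delta$ is the same implicit assumption underlying the paper's assertion that the adjoint groups are simple.
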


\begin{proof}
This is Goursat's lemma \cite[p.\ 793]{ribet76}; see also
\cite[Prop.\ 5.1]{chavdarov}.  The hypothesis guarantees that the
adjoint groups $G^\ad(\integ/\ell_i)$ are distinct
nonabelian simple groups.
\end{proof}

\begin{lemma}
\label{lemgptheory}
Let $r\in \nat$ and let $\ff$ be a finite field, with $\ff \not \in
\st{\ff_2, \ff_4, \ff_3,\ff_9}$.  Suppose that either $G$ is
$\gu_r/\ff$ and $r \ge 2$ or that $G$ is $\gsp_{2r}/\ff$.  Let
$G^\der$ be the derived group of $G$, let $G^\ad$ be the adjoint form
of $G$, and let $\alpha:G \ra  G^\ad$ and $\beta:G^\der \ra G^\ad$ be
the canonical projections.  Let $H\subset G(\ff)$ be a subgroup.  If
$\beta\inv(\alpha(H)) = G^\der(\ff)$, then $H$
contains $G^\der(\ff)$.
\end{lemma}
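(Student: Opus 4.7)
The plan is to unwind the hypothesis into a simple set-theoretic containment and then exploit the well-known perfectness of the finite classical group $G^\der(\ff)$ to upgrade that containment into the desired inclusion $G^\der(\ff)\subseteq H$.

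First I would rewrite the hypothesis. Since $\beta$ is the restriction of $\alpha$ to $G^\der$, and since the kernel of $\alpha:G(\ff)\ra G^\ad(\ff)$ is $Z(G)(\ff)$, an element $g\in G^\der(\ff)$ lies in $\beta\inv(\alpha(H))$ precisely when $\alpha(g)\in\alpha(H)$, i.e.\ when $g\in H\cdot Z(G)(\ff)$. Hence the hypothesis $\beta\inv(\alpha(H))=G^\der(\ff)$ is equivalent to the containment
\[
G^\der(\ff)\;\subseteq\;H\cdot Z(G)(\ff).
\]

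Next I would take commutators in order to kill the central factor. Given any $g_1,g_2\in G^\der(\ff)$, write $g_i=h_iz_i$ with $h_i\in H$ and $z_i\in Z(G)(\ff)$. Since $z_1$ and $z_2$ are central in $G(\ff)$, a direct computation gives $[g_1,g_2]=[h_1,h_2]\in H$, and therefore
\[
[G^\der(\ff),G^\der(\ff)]\;\subseteq\;H.
\]

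To finish I would invoke the standard theorem that the finite classical groups $\sp_{2r}(\ff)$ in case \eqref{casereal} and $\su_r(\ff)$ with $r\ge 2$ in case \eqref{caseimag} are perfect as soon as $\ff$ avoids the short list $\st{\ff_2,\ff_3,\ff_4,\ff_9}$ --- which is precisely the exclusion imposed in the lemma. Combining perfectness with the previous step yields $G^\der(\ff)=[G^\der(\ff),G^\der(\ff)]\subseteq H$, as required. The only nontrivial input is this perfectness statement; the rest is a diagram chase whose key observation is that commutators automatically discard any central factor, so one never has to analyze the intersection $H\cap Z(G)(\ff)$ directly.
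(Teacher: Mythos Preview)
Your argument is correct and is precisely the standard argument the paper is alluding to: the paper's own proof consists of the single sentence ``This is standard; the hypothesis on $\ff$ rules out exceptional cases,'' and your write-up unpacks that sentence by reducing to $G^\der(\ff)\subseteq H\cdot Z(G)(\ff)$, taking commutators to kill the center, and invoking perfectness of $\sp_{2r}(\ff)$ and $\su_r(\ff)$ away from the small excluded fields.
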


\begin{proof}
This is standard; the hypothesis on $\ff$ rules out exceptional cases.
\end{proof}

\section{Abelian varieties}
\label{subsecabvar}

Let $X/k$ be a principally polarized abelian variety over a field $k$.  For each rational
prime $\ell$ invertible in $k$, let $T_\ell(X)$ be the $\ell$-adic
Tate module of $X$, and let $X_\ell:= X[\ell](\bar k) = T_\ell(X)/
\ell T_\ell(X)$. Then $T_\ell X$ and $X_\ell$ come equipped with an action by $\gal(k)$.  Let
$\rho_{X/k,\integ_\ell}: \gal(k) \ra \aut(T_\ell(X))$ and
$\rho_{X/k,\ell}: \gal(k) \ra \aut(X_\ell)$ be the associated
representations, with respective images $H_{X/k,\integ_\ell}$ and 
$H_{X/k,\ell}$.  Let $H_{X/k,\rat_\ell}$ be the Zariski closure of
$H_{X/k,\integ_\ell}$ in $\aut(T_\ell(X) \tensor \rat)$. 

Suppose $X$ is simple.  Then the endomorphism 
algebra $D(X) = \End(X) \tensor\rat$ is a central simple algebra over
a number field $E(X)$ with positive involution.  Let $F(X)\subseteq E(X)$ be the subfield
fixed by the involution.  Then $F(X)$ is a totally real field, and either
$E(X) = F(X)$ or $E(X)$ is a totally imaginary quadratic extension of
$F(X)$.  Let $f(X) = [F(X):\rat]$, let $e(X) = [E(X):\rat]$, and let $d(X) =
\sqrt{[D(X):E(X)]}$.   

If $X$ and $Y$ are isogenous abelian varieties, write
$X\twiddle Y$.

If $K$ is a number field, let $M_K$ be the set of (finite) primes of
$K$; if $\idp \in M_K$, denote its residue field by $\kappa(\idp)$.
Suppose $X/K$ is an absolutely simple abelian variety.  As in the
introduction, let $M(X/K)\subset M_K$ be the set of primes of good reduction of
$X$.  It is convenient to distinguish the following subsets of
$M(X/K)$:
\begin{itemize}
\item $S(X/K) = \st{ \idp \in M(X/K) : X_\idp\text{ is simple}}$;
\item $S^*(X/K) = \st{ \idp \in M(X/K): X_\idp\text{ is absolutely
      simple}}$;
\item $R(X/K) = \st{\idp \in M(X/K): X_\idp\text{ is reducible}}$;
\item $R^*(X/K) = \st{\idp \in M(X/K): X_\idp\text{ is absolutely
      reducible}}$.
\end{itemize}

Then $R(X/K)$ is the complement of $S(X/K)$; $R^*(X/K)$ is the
complement of $S^*(X/K)$; $S^*(X/K) \subset S(X/K)$; and $R^*(X/K)
\supset R(X/K)$.  In this notation, \cite[Conj.\ 5.1]{murtypatankar}
states that $S(X/K)$ has density one if and only if $\End_{\bar K}(A)$
is commutative.

Many attributes of $X_{\bar K}$, the base change of $X$ to an
algebraic closure of $K$, are already detectable over a finite
extension of $K$.  Consider the following condition on an
abelian variety $X$ over a number field $K$ and a finite, Galois
extension $K'/K$:

\begin{equation}
\label{eqfieldcond}
\parbox{4in}{ $\End_{K'}(X) = \End_{\bar K}(X)$; for all but finitely
  many $\idp \in M(X/K)$, if $\idp' \in M(X/K')$ is a prime which divides $\idp$, and if
  $X_{\idp'}$ is simple, then $X_\idp$ is absolutely simple; and 
  $H_{X/K',\rat_\ell}$ is connected for each rational prime $\ell$.
}
\end{equation}

\begin{lemma}
\label{lemgoodfield}
Let $X/K$ be an abelian variety over a number field.  Fix a natural
number $n \ge 5$, and let $K'/K$ be a finite, Galois extension which
contains the field of definition of all $n$-torsion 
points of $X$.   Then $(X/K,K')$ satisfies \eqref{eqfieldcond}.
\end{lemma}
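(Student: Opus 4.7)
The plan is to verify each of the three clauses of \eqref{eqfieldcond} in turn. Clauses (i) and (iii) are quick consequences of classical results, and the bulk of the work is clause (ii).

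For (i), Silverberg's theorem on fields of definition of endomorphisms asserts that every element of $\End_{\bar K}(X)$ is already defined over $K(X[m])$ for any $m \ge 3$; since $K' \supseteq K(X[n])$ with $n \ge 5$, we obtain $\End_{K'}(X) = \End_{\bar K}(X)$. For (iii), I would invoke Serre's theorem on the connected monodromy field: there exists a finite Galois extension $K_{\rm conn}/K$, independent of $\ell$, such that $H_{X/L,\rat_\ell}$ is connected for every $\ell$ and every $L \supseteq K_{\rm conn}$, and $K_{\rm conn}$ is contained in $K(X[m])$ for any $m \ge 3$. Since $K' \supseteq K(X[n]) \supseteq K_{\rm conn}$, clause (iii) follows.

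For clause (ii), first discard the finite set of $\idp \in M(X/K)$ that ramify in $K'/K$ or whose residue characteristic divides $n$. Fix such a $\idp$ with $\idp' \mid \idp$ in $K'$, and suppose $X_{\idp'}$ is simple. Since $\overline{\kappa(\idp)} = \overline{\kappa(\idp')}$, absolute simplicity of $X_\idp$ and $X_{\idp'}$ coincide, so it suffices to show that $X_{\idp'}$ is absolutely simple. Let $\pi = \pi_{\idp'}$ be its Frobenius endomorphism; by Honda--Tate, $F := \rat(\pi)$ is the center of the division algebra $D := \End(X_{\idp'})\tensor\rat$. The key step is to show that $\pi \equiv 1 \pmod n$ in $\calo_F$. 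Since $K' \supseteq K(X[n])$, the group $\gal(\bar K/K')$ acts trivially on $X[n]$, and therefore after specialization $\pi$ acts trivially on $X_{\idp'}[n]$. Thus $\pi - 1 = n\phi$ for some $\phi \in \End(X_{\idp'})$, and centrality of $\pi$ in $D$ forces $\phi$ to be central as well, so $\phi \in F \cap \End(X_{\idp'}) \subseteq \calo_F$.

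Suppose now for contradiction that $X_{\idp'}$ is not absolutely simple. Then there is some $N \ge 2$ with $\rat(\pi^N) \subsetneq F$, and any nontrivial $\sigma \in \gal(F/\rat)$ fixing $\pi^N$ satisfies $\sigma(\pi) = \zeta \pi$ for a nontrivial root of unity $\zeta \in F$. Applying $\sigma$ to the congruence $\pi \equiv 1 \pmod n$ and subtracting yields $(\zeta - 1)\pi \in n \calo_F$; since $\pi$ is a unit modulo $n$ (its norm is a power of the residue characteristic, which is coprime to $n$), $\zeta - 1 \in n \calo_F$. Writing $\zeta - 1 = n\beta$ with $0 \ne \beta \in \calo_F$ and taking archimedean absolute values,
\[
n^{[F:\rat]} \le \abs{N_{F/\rat}(\zeta - 1)} = \prod_{\tau\colon F \hookrightarrow \cx} \abs{\tau(\zeta - 1)} \le 2^{[F:\rat]},
\]
since $\abs{\tau(\zeta)} = 1$ for every complex embedding $\tau$. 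This forces $n \le 2$, contradicting $n \ge 5$. I expect the main obstacle to be the passage from the endomorphism statement ``$\pi - 1$ kills $X_{\idp'}[n]$'' to the commutative congruence ``$\pi \equiv 1 \pmod n$ in $\calo_F$''; the centrality argument handles this by exhibiting $(\pi - 1)/n$ as a central, integral element of $D$, hence an element of $\calo_F$.
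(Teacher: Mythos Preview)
Your argument is essentially correct; clauses (i) and (iii) match the paper's citations (Silverberg for (i); the paper cites Silverberg--Zarhin rather than Serre for (iii), but the content is the same). The genuine difference is in clause (ii).

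The paper dispatches (ii) in one line by re-applying Silverberg's criterion to the \emph{reduction}: since $X[n](K') \hookrightarrow X_{\idp'}[n](\kappa(\idp'))$ is a bijection for $\idp$ prime to $n$, all $n$-torsion of $X_{\idp'}$ is already $\kappa(\idp')$-rational, so $\End_{\kappa(\idp')}(X_{\idp'}) = \End_{\overline{\kappa(\idp')}}(X_{\idp'})$, and simplicity over $\kappa(\idp')$ forces absolute simplicity. Your route is more hands-on: you extract the congruence $\pi \equiv 1 \pmod n$ in $\calo_F$ and then rule out nontrivial roots of unity by a norm estimate. This is a perfectly good alternative, and has the virtue of not requiring Silverberg's theorem in positive characteristic; it also makes transparent \emph{why} $n \ge 3$ suffices. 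The paper's argument, on the other hand, is shorter and avoids the Honda--Tate input (``not absolutely simple $\Rightarrow \rat(\pi^N)\subsetneq\rat(\pi)$ for some $N$'') that you use implicitly.

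One technical point to repair in your version: you write ``$\sigma \in \gal(F/\rat)$'' and ``$\zeta \in F$'', but $F = \rat(\pi)$ need not be Galois over $\rat$, and $\zeta = \sigma(\pi)/\pi$ need not lie in $F$. The fix is routine: take $\sigma$ to be a nontrivial $\rat(\pi^N)$-embedding $F \hookrightarrow \bar\rat$, work in a number field $L$ containing both $F$ and $\sigma(F)$ (so $\zeta \in \calo_L$), and run the norm inequality with $[L:\rat]$ in place of $[F:\rat]$. The conclusion $n \le 2$ is unchanged.
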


\begin{proof}
There are three conditions in \eqref{eqfieldcond}.  The first follows from
Silverberg's criterion \cite[Thm.\ 
2.4]{silverberg92}.  The second follows from this and the fact that,
if $\idp$ is relatively prime to $n$, then
$X[n](K') \inject X_{\idp'}[n](\kappa(\idp'))$.    The final condition
is
\cite[Thm.\ 4.6]{silverbergzarhinconnected}.
\end{proof}

In Lemma \ref{lemgoodfield}, if one insists that $n$ be divisible by
two distinct primes $n_1$ and $n_2$, each of which is at least five,
then the second condition of \eqref{eqfieldcond} holds for all primes $\idp
\in M(X/K)$.

Throughout this paper Lemma \ref{lemgoodfield} will be used, often
implicitly, to show the existence of an extension $K'/K$ such that
$(X/K,K')$ satisfies \eqref{eqfieldcond}.

We will often work with an abelian variety $X/K$, a group scheme
$G/\integ[1/\Delta]$, and an infinite set of rational primes $\ELL \subset
M_\rat$ relatively prime to $\Delta$ which satisfy the following hypotheses:
\begin{equation}
\label{eqgoursat}
\parbox{4in}{The abelian variety $X$ is absolutely simple.
For each $\ell \in M_\rat$, $H_{X/K,\ell}$ is isomorphic to
a subgroup of $G(\integ/\ell)$.    For each $\ell \in \ELL$,
$H_{X/K,\ell}$ is of type $G(\integ/\ell)$.
For each finite subset $A \subset
\ELL$, the image of $\gal(K)$ under $\cross_{\ell \in A}
\rho_{X,\ell}$ is $\cross_{\ell \in A}H_{X/K,\ell}$.
}
\end{equation}

If $(X/K, G/\integ[1/\Delta], \ELL)$ satisfies \eqref{eqgoursat}, and if $A \subset
M_\rat$ is any set of primes, let $\isotropic(X/K; G; A) \subset M(X/K)$
be the set of primes $\idp$ such that for each $\ell \in
A$ and each Frobenius element $\sigma_\idp \in \gal(K)$ at $\idp$,
$\rho_{X,\ell}(\sigma_\idp) \in \isotropic_\ell(G)$.  Its complement $\anisotropic(X/K;G;A)$ is
the set of primes $\idp \in M(X/K)$ for which there exists some prime
$\ell \in A$ such that $\rho_{X,\ell}(\sigma_\idp) \in \anisotropic_\ell(G)$.  

Let $\isotropic(X/K; G) = \isotropic(X/K; G; M_\rat)$ and let $\anisotropic(X/K;G) = \anisotropic(X/K; G; M_\rat)$.
Note that for any $A\subset M_\rat$, $\isotropic(X/K; G) \subseteq \isotropic(X/K; G;
A)$ and $\anisotropic(X/K;G;A) \subseteq \anisotropic(X/K;G)$.

\begin{lemma}
\label{lemdensityanisoone}
Suppose $(X/K, G/\integ[1/\Delta], \ELL)$ satisfies \eqref{eqgoursat}.  Suppose
that there is a constant $C<1$ such that for each $\ell \in \ELL$ and
each group $H_\ell$ of type $G(\integ/\ell)$,
$\abs{\isotropic_\ell(H_\ell)}/\abs{H_\ell} < C$.  Then $\anisotropic(X/K;
G)$ has density one.
\end{lemma}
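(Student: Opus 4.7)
The plan is a direct Chebotarev argument: for any finite subset $A \subset \ELL$, the density of primes $\idp$ whose Frobenius simultaneously lies in $\isotropic_\ell(H_{X/K,\ell})$ for every $\ell \in A$ is at most $C^{|A|}$, so letting $|A|$ grow forces the density of $\isotropic(X/K;G)$ to be zero, whence $\anisotropic(X/K;G) = M(X/K)\setminus\isotropic(X/K;G)$ has density one.

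In more detail, I would fix an arbitrary finite subset $A = \st{\ell_1,\ldots,\ell_r}\subset\ELL$. By the last clause of hypothesis \eqref{eqgoursat}, the image of $\gal(K)$ under $\cross_{\ell\in A}\rho_{X,\ell}$ is the full product $\cross_{\ell\in A}H_{X/K,\ell}$; thus the kernel of the product representation cuts out a finite Galois extension of $K$ whose Galois group is canonically this product. Each set $\isotropic_\ell(G)$ is stable under $G(\integ/\ell)$-conjugation (as noted just before Lemma \ref{lemcexists}), and by definition $\isotropic_\ell(H_{X/K,\ell}) = H_{X/K,\ell}\cap \isotropic_\ell(G)$ is therefore stable under $H_{X/K,\ell}$-conjugation. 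Hence $\prod_{i=1}^r \isotropic_{\ell_i}(H_{X/K,\ell_i})$ is a conjugation-stable subset of $\prod_{i=1}^r H_{X/K,\ell_i}$, and the Chebotarev density theorem gives
\[
\text{density}\bigl(\isotropic(X/K;G;A)\bigr) \;=\; \prod_{i=1}^r \frac{\abs{\isotropic_{\ell_i}(H_{X/K,\ell_i})}}{\abs{H_{X/K,\ell_i}}} \;<\; C^r,
\]
the final inequality coming from the lemma's hypothesis, which applies because each $H_{X/K,\ell_i}$ is of type $G(\integ/\ell_i)$ since $\ell_i\in\ELL$.

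Since $\isotropic(X/K;G) \subseteq \isotropic(X/K;G;A)$ for every finite $A\subset\ELL$, its upper density is bounded by $C^r$ for every $r$; because $\ELL$ is infinite and $C<1$, letting $r\to\infty$ shows that $\isotropic(X/K;G)$ has density zero, so $\anisotropic(X/K;G)$ has density one in $M_K$ (the finitely many primes of bad reduction being negligible). There is no substantive obstacle: the argument merely packages the quantitative group-theoretic input that is hypothesized on $\abs{\isotropic_\ell(H_\ell)}/\abs{H_\ell}$ together with Chebotarev and the Goursat-type surjectivity already built into \eqref{eqgoursat}. The only point requiring minor care is verifying that the condition defining $\isotropic(X/K;G;A)$ really is independent of the choice of Frobenius $\sigma_\idp$, but this follows immediately from the conjugation-stability noted above.
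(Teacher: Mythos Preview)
Your proof is correct and follows essentially the same approach as the paper's: both take a finite subset $A\subset\ELL$, apply Chebotarev to the product representation (using the surjectivity in \eqref{eqgoursat}) to get density $\prod_{\ell\in A}\abs{\isotropic_\ell(H_{X/K,\ell})}/\abs{H_{X/K,\ell}}<C^{|A|}$ for $\isotropic(X/K;G;A)$, and let $|A|\to\infty$. You have simply made explicit the conjugation-stability and well-definedness points that the paper leaves to the reader.
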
 

\begin{proof}
Let $A\subset \ELL$ be a finite subset.  The
Chebotarev density theorem, applied to the representation
$\cross_{\ell \in A}\rho_{X/K,\ell}$ of $\gal(K)$, shows that the
density of $\isotropic(X/K; G; A)$ is $\prod_{\ell \in
  A}\abs{\isotropic_\ell(H_{X/K,\ell})}/\abs{H_{X/K,\ell}}$, which is less than $C^{\abs
A}$. By taking $A$ arbitrarily large, we find that $\isotropic(X/K;G)$ has
density zero and its complement, $\anisotropic(X/K;G)$, has density one.
\end{proof}

Recall that if $G$ is of type \eqref{casereal}, or of type
\eqref{caseimag} with $r$ odd, and if $\ell$ is inert in $Z$, then the
natural representation of $G(\integ/\ell)$ is an irreducible module
over $T^\an_\ell$.  Equivalently, some semisimple element of $G(\integ/\ell)$
acts irreducibly on $V\tensor\integ/\ell$.

\begin{lemma}
\label{lemfrobirred}
Suppose $(X/K, G/\integ[1/\Delta], \ELL)$ satisfies \eqref{eqgoursat}.  
Suppose $\idp\in M(X/K)$, and let
$\sigma_\idp$ be a Frobenius element at $\idp$.  If
$\rho_{X/K,\ell}(\sigma_\idp) 
\in \anisotropic_\ell(G)$, and if some semisimple element of $G(\integ/\ell)$ acts
irreducibly on $X_\ell$,  then the reduction $X_\idp$ is
simple.
\end{lemma}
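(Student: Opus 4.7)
The plan is to argue by contrapositive: assume $X_\idp$ is not simple, and derive that $\rho_{X/K,\ell}(\sigma_\idp)$ cannot act irreducibly on $X_\ell$, contradicting the two hypotheses.

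First I would record the standard reduction from simplicity of $X_\idp$ to irreducibility of the Galois action on the mod-$\ell$ Tate module. If $X_\idp$ is not simple, choose a nontrivial abelian subvariety $A \subset X_\idp$. Then $T_\ell A \hookrightarrow T_\ell X_\idp$ is a saturated $\gal(\kappa(\idp))$-submodule (because the quotient $T_\ell(X_\idp/A)$ is torsion-free), so $T_\ell A / \ell T_\ell A$ is a proper nonzero Frobenius-stable subspace of $X_{\idp,\ell} = X_\ell$. Thus the Frobenius endomorphism, which acts on $T_\ell(X_\idp) \iso T_\ell(X)$ as $\rho_{X/K,\integ_\ell}(\sigma_\idp)$, acts reducibly on $X_\ell$ via $\rho_{X/K,\ell}(\sigma_\idp)$.

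It therefore suffices to show that under the two hypotheses, $\rho_{X/K,\ell}(\sigma_\idp)$ acts irreducibly on $V\tensor\integ/\ell = X_\ell$. By the description of $\anisotropic_\ell(G)$ recorded just before the lemma statement, the hypothesis $\rho_{X/K,\ell}(\sigma_\idp)\in \anisotropic_\ell(G)$ means that $\rho_{X/K,\ell}(\sigma_\idp)$ is $G(\integ/\ell)$-conjugate to a regular element $t$ of $T^\an_\ell$. Since conjugation preserves the lattice of stable subspaces, it is enough to show that $t$ acts irreducibly on $V\tensor\integ/\ell$. Now the assumption that some semisimple element of $G(\integ/\ell)$ acts irreducibly on $X_\ell$ is, by the remark preceding the lemma, equivalent to $T^\an_\ell$ acting irreducibly on $V\tensor\integ/\ell$ (for $G$ of type \eqref{casereal} or of type \eqref{caseimag} with $r$ odd).

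The final step is to pass from irreducibility of $T^\an_\ell$ to irreducibility of the single regular element $t$. The point is that regularity of $t$ in $T^\an_\ell$ means the connected centralizer of $t$ in $G$ is exactly $T^\an_\ell$, so that every $t$-stable subspace of $V\tensor\overline{\ff_\ell}$ is automatically $T^\an_\ell$-stable: after passing to the algebraic closure, the weight space decomposition of $V\tensor\overline{\ff_\ell}$ under $T^\an_\ell$ is refined by the eigenspace decomposition of $t$, and regularity forces these two decompositions to coincide. Consequently a Frobenius-stable $\ff_\ell$-subspace of $V\tensor\integ/\ell$ invariant under $t$ is in particular $T^\an_\ell$-stable, and irreducibility of the torus action forces it to be trivial. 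The expected main obstacle is precisely this last step—translating regularity of $t$ into the equality of $t$-stable and $T^\an_\ell$-stable subspaces, taking care with Galois descent from $\overline{\ff_\ell}$ to $\ff_\ell$—but it rests on the standard fact that a regular semisimple element and the maximal torus it generates share the same invariant theory on any rational representation.
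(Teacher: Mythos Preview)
Your proof is correct and matches the paper's approach: establish that $\rho_{X/K,\ell}(\sigma_\idp)$, being conjugate to a regular element of $T^\an_\ell$, acts irreducibly on $X_\ell$, and infer simplicity of $X_\idp$ (the paper argues directly, lifting irreducibility to $T_\ell(X)$ and citing Tate, whereas your contrapositive uses only the elementary direction that a subvariety yields a saturated sub-Tate-module). One caveat: your closing appeal to ``any rational representation'' is too strong---a regular semisimple element can have repeated eigenvalues on a representation whose weight differences are not all roots---but for the natural representations of $\gsp_{2r}$ and $\gu_r$ every difference of distinct weights is a root, so regularity does separate the weight spaces and your argument goes through.
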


\begin{proof}
Let $\ell$ be a
rational prime relatively prime to $\idp$, and suppose
$\rho_{X/K,\ell}(\sigma_\idp) \in \anisotropic_\ell(G)$.   Then the group
$\ang{\rho_{X/K,\ell}(\sigma_\idp)}$ acts irreducibly on $X_\ell$, so
that $\ang{\rho_{X/K,\integ_\ell}(\sigma_\idp)}$ acts 
  irreducibly on $T_\ell(X)$.   The Tate module of $X$ is an
  irreducible $\gal(\kappa(\idp))$-module, thus the abelian variety
  $X_\idp/\kappa(\idp)$ is simple  
\cite[Thm.\ 1(b)]{tateendff}. 
\end{proof}

\begin{lemma}
\label{lemfrobirredunitary}
Suppose $(X/K,G/\integ[1/\Delta],\ELL)$ satisfies \eqref{eqgoursat},
and that $G$ is of type \eqref{caseimag} with $r$ even.  Suppose $\ell
\in \ELL$ is inert in $Z$, and that $\idp \in M(X/K)$.  If
$\sigma_\idp$ is a Frobenius element at $\idp$, if
$\rho_{X/K,\ell}(\sigma_\idp) \in \anisotropic_\ell(G)$, and if
$X_\ell$ is the natural representation of $G(\integ/\ell)$, then the
reduction $X_\idp$ is simple.
\end{lemma}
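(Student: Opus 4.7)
The plan is to show that the characteristic polynomial $P(T) \in \integ[T]$ of Frobenius on $T_\ell(X)$ is irreducible over $\rat$; Honda--Tate theory then forces $X_\idp$ to be simple.

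Since $\rho_{X/K,\ell}(\sigma_\idp)$ is $G(\integ/\ell)$-conjugate to a regular element of $T^\an_\ell$, it has pairwise distinct eigenvalues on $X_\ell \otimes \bar\ff_\ell$. The characteristic polynomial $\bar P(T) \in \ff_\ell[T]$ of Frobenius on $X_\ell$ is therefore squarefree, and hence so is $P(T)$. By \cite[Thm.\ 1(b)]{tateendff}, $X_\idp$ is simple if and only if $P(T)$ is a power of an irreducible polynomial, which combined with squarefreeness reduces the problem to showing that $P(T)$ is irreducible over $\rat$.

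The torus $T^\an_\ell$ stabilizes the two dual subspaces $W_1, W_2 \subset X_\ell$, and accordingly $\bar P(T) = \bar p_1(T)\bar p_2(T)$, where $\bar p_i$ is the characteristic polynomial of Frobenius on $W_i$. The maximal anisotropy of $T^\an_\ell$ combined with regularity of Frobenius forces each $\bar p_i$ to be irreducible over $\ff_\ell$: the weights of $T^\an_\ell$ on each $W_i$ form a single $\ff_\ell$-Frobenius orbit. Moreover, the Hermitian pairing identifies $W_2$ with the $E/F$-conjugate dual of $W_1$, and the fact that Frobenius scales both this pairing and the induced Weil pairing on $T_\ell(X)$ by $q = |\kappa(\idp)|$ forces the roots of $\bar p_2$ in $\bar\ff_\ell$ to be exactly $\{q/\alpha : \alpha \text{ a root of } \bar p_1\}$.

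Suppose for contradiction that $P = P_A \cdot P_B$ is a nontrivial factorization in $\integ[T]$ with $P_A$ irreducible. Reducing mod $\ell$, $\bar P_A$ is a nontrivial proper divisor of $\bar p_1 \bar p_2$, and the irreducibility of $\bar p_i$ gives $\bar P_A \in \{\bar p_1, \bar p_2\}$; without loss of generality $\bar P_A = \bar p_1$. Being a Weil polynomial (by Honda--Tate), $P_A$ has roots closed under the involution $\pi \mapsto q/\pi$, so reducing, every root $\alpha$ of $\bar P_A$ has its Weil partner $q/\alpha$ also a root of $\bar P_A = \bar p_1$. But by the previous paragraph, $q/\alpha$ is instead a root of $\bar p_2$, which is coprime to $\bar p_1$---a contradiction. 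Therefore $P$ is irreducible over $\rat$, and $X_\idp$ is simple. The main technical point requiring care is verifying the precise compatibility of the Hermitian duality on $V$ with the Weil pairing on $T_\ell(X)$; this ultimately follows from the identification of $G$ with (an integral form of) the Lefschetz group of $X$, though one must track the similitude character and distinguish the $E/F$-conjugation from the ambient $\ff_\ell$-Frobenius.
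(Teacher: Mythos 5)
Your argument is correct, and it reaches the same contradiction as the paper's proof, but it runs through characteristic polynomials and Weil numbers rather than through the isogeny decomposition. The paper argues at the level of abelian varieties: if $X_\idp$ is reducible then (since a regular element of $T^\an_\ell$ admits only $W_1$ and $W_2$ as invariant subspaces) $X_\idp \twiddle Y_1\cross Y_2$ with each $Y_j$ simple; the polarization pairs $Y_1$ with $Y_2$, forcing $Y_1\twiddle \hat Y_2\twiddle Y_2$ and hence equality of their Frobenius eigenvalue multisets mod $\ell$, contradicting the disjointness of the eigenvalues on $W_1$ and $W_2$ that regularity provides. Your version replaces ``$Y_1$ and $Y_2$ are placed in duality by the polarization'' with ``the involution $\pi\mapsto q/\pi$ interchanges the roots of $\bar p_1$ and $\bar p_2$,'' and replaces ``each abelian variety is isogenous to its dual'' with ``the root set of each irreducible factor of $f_\idp$ is closed under $\pi\mapsto q/\pi$.'' These are the same two inputs in different clothing, and both proofs lean on the same implicit compatibility --- that the identification $X_\ell\iso V\tensor\integ/\ell$ carries the Weil pairing to (a trace of) the Hermitian form, with the similitude character of $\sigma_\idp$ equal to $q$; you are right to flag this as the point needing care, and the paper's appeal to ``the polarization'' uses exactly the same fact. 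Two small bookkeeping remarks: the closure of the roots of $P_A$ under $\pi\mapsto q/\pi$ comes from Weil's Riemann hypothesis (all archimedean absolute values equal $\sqrt q$) rather than from Honda--Tate per se, and one should note that all roots of $f_\idp$ are $\ell$-adic units (the constant term is a power of $q$), so that $\pi\mapsto q/\pi$ does reduce to $\alpha\mapsto q/\alpha$ mod a place above $\ell$. What your route buys is that it never needs to invoke dual abelian varieties or the classification of the possible isogeny decompositions; what the paper's route buys is brevity, since it quotes Tate's theorem once and avoids tracking which mod-$\ell$ factor each rational irreducible factor reduces to.
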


\begin{proof}
Possibly after conjugating, assume that $t:=
\rho_{X/K,\ell}(\sigma_p)$ lies in $T_\ell^\an$.
Recall that $T^\an_\ell$ stabilizes two maximal isotropic
subspaces $W_1$ and $W_2$ of $X_\ell$ which are in duality with each
other; the action of $T^\an_\ell$ on $W_2$ is the Frobenius
twist of its action on $W_1$.  By Tate's theorem, either $X_\idp$ is
irreducible, or $X_\idp \twiddle Y_1 \cross Y_2$ with each $Y_j$
irreducible.  In the latter case, the polarization would place $Y_1$
and $Y_2$ in duality, and in particular $Y_1$ and $Y_2$ are
isogenous.  However, since $t$ is a regular element of 
$T^\an_\ell$, its eigenvalues on $W_1$ are distinct from its
eigenvalues on $W_2$.  Therefore, $X_\idp$ is
irreducible.
\end{proof}

\begin{lemma}
\label{lemdensitySone}
Suppose $(X/K, G/\integ[1/\Delta], \ELL)$ satisfies \eqref{eqgoursat}.  Suppose
that there is a constant $C<1$ such that for each $\ell \in \ELL$ and
each group $H_\ell$ of type $G(\integ/\ell)$, 
$\abs{\isotropic_\ell(H_\ell)}/\abs{H_\ell} < C$.  Suppose that for
each $\ell \in \ELL$, either
\begin{alphabetize}
\item some semisimple element of $G(\integ/\ell)$
  acts irreducibly on $X_\ell$; or
\item $G$ is of type \eqref{caseimag}, $r$ is even, $\ell$ is inert in
  $Z$, and $X_\ell$ is the
natural representation.
\end{alphabetize}
Then $S(X/K)$ has density one.
\end{lemma}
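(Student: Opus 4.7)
The plan is to combine Lemma \ref{lemdensityanisoone} with Lemmas \ref{lemfrobirred} and \ref{lemfrobirredunitary}. The key point is that every prime in the density-one set $\anisotropic(X/K;G;\ELL)$ can be shown to lie in $S(X/K)$, using one of the two irreducibility hypotheses at a well-chosen $\ell$.

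First I would establish that the set $\anisotropic(X/K;G;\ELL)$ has density one. This is essentially the content of Lemma \ref{lemdensityanisoone}, but a mild strengthening: inspecting that proof, for any finite subset $A \subseteq \ELL$, Chebotarev applied to the joint representation $\cross_{\ell \in A}\rho_{X/K,\ell}$ (whose image is $\cross_{\ell \in A} H_{X/K,\ell}$ by the Goursat hypothesis in \eqref{eqgoursat}) shows that $\isotropic(X/K;G;A)$ has density at most $C^{\abs A}$. Since $\isotropic(X/K;G;\ELL) \subseteq \isotropic(X/K;G;A)$ for every finite $A \subseteq \ELL$, letting $\abs A \to \infty$ gives that $\isotropic(X/K;G;\ELL)$ has density zero, so its complement $\anisotropic(X/K;G;\ELL)$ has density one.

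Next, I would show that $\anisotropic(X/K;G;\ELL) \subseteq S(X/K)$. Pick any $\idp$ in this set. By definition there exists some $\ell \in \ELL$ and some Frobenius element $\sigma_\idp$ at $\idp$ such that $\rho_{X/K,\ell}(\sigma_\idp) \in \anisotropic_\ell(G)$. By hypothesis, this $\ell$ satisfies either (a) or (b). In case (a), some semisimple element of $G(\integ/\ell)$ acts irreducibly on $X_\ell$, so Lemma \ref{lemfrobirred} applies and $X_\idp$ is simple. In case (b), $G$ is of type \eqref{caseimag} with $r$ even, $\ell$ is inert in $Z$, and $X_\ell$ is the natural representation, so Lemma \ref{lemfrobirredunitary} applies and again $X_\idp$ is simple. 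Either way $\idp \in S(X/K)$, and combining with the previous paragraph yields that $S(X/K)$ has density one.

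There is no real obstacle here; the argument is purely a matter of bookkeeping, assembling the three preceding lemmas. The only subtlety worth checking is that the density-one statement from Lemma \ref{lemdensityanisoone} can be localized to primes $\ell \in \ELL$ (rather than all of $M_\rat$), since it is precisely for $\ell \in \ELL$ that hypotheses (a) and (b), together with the Goursat condition in \eqref{eqgoursat}, are available.
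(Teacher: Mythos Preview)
Your proposal is correct and follows exactly the paper's approach: the paper's proof is simply ``Part (a) follows immediately from Lemmas \ref{lemdensityanisoone} and \ref{lemfrobirred}. Part (b) follows from Lemmas \ref{lemdensityanisoone} and \ref{lemfrobirredunitary}.'' You have spelled out the bookkeeping in more detail, and in particular you correctly note and handle the point that the witnessing $\ell$ must lie in $\ELL$ (so that hypotheses (a) and (b) are available), which the proof of Lemma \ref{lemdensityanisoone} already ensures since it only uses finite subsets $A\subset\ELL$.
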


\begin{proof}
Part (a) follows immediately from Lemmas \ref{lemdensityanisoone} and
\ref{lemfrobirred}.  Part (b) follows from Lemmas
\ref{lemdensityanisoone} and \ref{lemfrobirredunitary}.
\end{proof}

In the other direction, we have:

\begin{lemma}
\label{lemnewRall}
Suppose $X/K$ is an absolutely simple abelian variety over a number
field, and suppose $\idp\in M(X/K)$.  Suppose that there exist a prime
$\ell$ relatively prime to $\idp$, an integer $d\ge 2$,  and a $\rat_\ell$-representation
$W_{\rat_\ell}$ of $\gal(K)$  with $T_\ell(X)\tensor\rat \iso
W_{\rat_\ell}^{\oplus d}$ as $\gal(K)$-module. 
\begin{alphabetize}
\item There are simple abelian varietes $Y_1, \ldots, Y_s$ over
  $\kappa(\idp)$ such that
\begin{equation}
\label{eqisogdecomp}
X_\idp \twiddle Y_1^{e_1}  \cross \cdots \cross Y_s^{e_s}.
\end{equation}
For each $j$ with $1 \le j \le s$, $d | e_j \cdot d(Y_j)$.
\setcounter{enumi}{1}  
\item If the residue field $\kappa(\idp)$ is a field of prime order,
  then each $e_j \ge d$.  In particular, $X_\idp$ is not simple.
\end{alphabetize}
\end{lemma}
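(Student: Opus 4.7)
The plan is to combine Poincar\'e reducibility over $\kappa(\idp)$ with a direct comparison of $\rat_\ell[\gal(\kappa(\idp))]$-module decompositions. The hypothesis $T_\ell(X)\tensor\rat\iso W_{\rat_\ell}^{\oplus d}$ forces every irreducible $\gal(\kappa(\idp))$-constituent of $T_\ell(X_\idp)\tensor\rat$ to appear with multiplicity divisible by $d$; the goal is to read off from this what the isogeny decomposition of $X_\idp$ must look like.

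For part (a), I would first apply Poincar\'e's reducibility theorem to write $X_\idp \twiddle Y_1^{e_1}\cross\cdots\cross Y_s^{e_s}$ with pairwise non-isogenous simple factors $Y_j$. The rational Tate module functor then gives
\[
T_\ell(X_\idp)\tensor\rat \iso \bigoplus_{j=1}^{s} (T_\ell(Y_j)\tensor\rat)^{\oplus e_j}
\]
as $\gal(\kappa(\idp))$-modules. Since $\idp$ has good reduction, the reduction isomorphism identifies this module with the restriction to a decomposition group of $T_\ell(X)\tensor\rat$, which by hypothesis is $W_{\rat_\ell}^{\oplus d}$. Next I would invoke the following consequence of Tate's theorem \cite{tateendff}: for a simple $Y/\kappa(\idp)$, the semisimple module $T_\ell(Y)\tensor\rat$ has each of its $\rat_\ell[\gal(\kappa(\idp))]$-irreducible constituents occurring with multiplicity exactly $d(Y)$, and non-isogenous simple factors contribute disjoint sets of constituents (their Frobenius minimal polynomials over $\rat$ are distinct irreducibles, hence coprime over $\rat_\ell$). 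Picking any irreducible constituent $N_j$ of $T_\ell(Y_j)\tensor\rat$ and counting its multiplicity in both decompositions yields $e_j\cdot d(Y_j) = d\cdot m_j$, where $m_j$ is the multiplicity of $N_j$ in $W_{\rat_\ell}$; hence $d \mid e_j d(Y_j)$.

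For part (b), when $\kappa(\idp) = \ff_p$ is the prime field, I would invoke the well-known fact (a consequence of the Honda--Tate local-invariant computation) that every simple abelian variety over $\ff_p$ has commutative endomorphism algebra, so $d(Y_j) = 1$ for each $j$. Part (a) then collapses to $d \mid e_j$; together with $d \ge 2$ this forces $e_j \ge d \ge 2$ for every $j$, and in particular $X_\idp$ cannot be simple.

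The main obstacle is the multiplicity-$d(Y_j)$ statement used in part (a). The danger is to assert multiplicity $d(Y_j)^2$, which would follow from naively viewing $T_\ell(Y_j)\tensor\rat$ as free of rank one over $D(Y_j)\tensor\rat_\ell$; the correct value $d(Y_j)$ emerges only after decomposing via the idempotents of the center $E(Y_j)\tensor\rat_\ell = \prod_{\lambda\mid\ell} E(Y_j)_\lambda$ and matching $\rat_\ell$-dimensions in each local block against the degree of the corresponding Frobenius minimal polynomial factor.
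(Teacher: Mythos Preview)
Your argument is correct and reaches the same conclusion as the paper, but the bookkeeping is organized differently. The paper works entirely with characteristic polynomials over $\rat$: it observes that the characteristic polynomial of Frobenius $f_\idp(t)$ equals $g_{\idp,\ell}(t)^d$ for $g_{\idp,\ell}$ the characteristic polynomial of $\sigma_\idp$ on $W_{\rat_\ell}$, then uses a coefficient-by-coefficient argument together with Gauss's lemma to show $g_{\idp,\ell}\in\integ[t]$, and finally factors $g_{\idp,\ell}=\prod g_j^{a_j}$ over $\rat$ and matches exponents against the Honda--Tate description $f_\idp=\prod g_j^{\,e_j d(Y_j)}$ to obtain $e_j d(Y_j)=a_j d$. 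You instead stay over $\rat_\ell$ throughout, decomposing each $V_\ell(Y_j)$ into its irreducible $\rat_\ell[\mathrm{Frob}]$-constituents (each with multiplicity $d(Y_j)$, since Frobenius acts semisimply with characteristic polynomial $g_j^{d(Y_j)}$ and $g_j$ is separable) and comparing multiplicities in $W_{\rat_\ell}^{\oplus d}$.

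Both routes encode the same combinatorics. Your version avoids the Gauss-lemma descent to $\integ[t]$ at the cost of needing the multiplicity-$d(Y_j)$ statement you flagged; the paper's polynomial formulation makes that statement immediate (it is literally the exponent in $g_j^{d(Y_j)}$) but must first justify that the $d$-th root lives over $\rat$. Part (b) is identical in both: over a prime field every simple abelian variety has commutative endomorphism algebra, so $d(Y_j)=1$ and $d\mid e_j$.
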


\begin{proof}
Recall that $f_\idp(t)$, the characteristic polynomial of Frobenius of
$X_\idp$, coincides with the characteristic polynomial of
$\rho_{X/K,\integ_\ell}(\sigma_\idp)$.  Since $T_\ell(X)\tensor \rat \iso
W_{\rat_\ell}^{\oplus d}$, there exists a polynomial $g_{\idp,\ell}(t)
\in \integ_\ell(t)$ with $f_\idp(t) = g_{\idp,\ell}(t)^d$.    Note that $f_\idp$
and $g_{\idp,\ell}$ are both monic.  By inductively analyzing the coefficients of
$g_{\idp,\ell}(t)$ (in descending order), one sees that $g_{\idp,\ell}(t) \in \rat[t]\cap
\integ_\ell[t] \subset \rat_\ell[t]$; by Gauss's lemma, $g_{\idp,\ell}(t)
\in \integ[t]$.  Factor $g_{\idp,\ell}(t) = g_1(t)^{a_1} \cdots g_s(t)^{a_s}$ as a
product of powers of distinct irreducible polynomials, so that
$f_\idp(t) = g_1(t)^{a_1d} \cdots g_s(t)^{a_sd}$.  Consider some
$j$ with $1 \le j \le s$.  By the theory developed by Tate and Honda  
\cite[Thm.\ 1(b) and Thm. 2(e)]{tateendff} \cite[Thm.\ 1 and Rem.\ 2]{tatehonda},
there is a simple abelian variety $Y_j$ over $\kappa(\idp)$ with
characteristic polynomial of Frobenius $g_j(t)^{d(Y_j)}$.  Moreover,
any abelian variety over $\kappa(\idp)$ with characteristic polynomial
divisible by $g_j(t)$ contains a sub-abelian variety isogenous to
$Y_j$.  From this the decomposition \eqref{eqisogdecomp} follows,
where $e_j = \frac{a_jd}{d(Y_j)}$.  This proves (a).

For (b), a simple abelian variety over a prime field has commutative
endomorphism ring.  (This follows from \cite[Thm.\ 1(ii)]{tatehonda},
and was noted in \cite[p.\ 469]{chiII}.)   Therefore, each $Y_j$ has
commutative endomorphism ring; $d(Y_j)= 1$; and each exponent $e_j$ in
\eqref{eqisogdecomp} is a multiple of $d \ge 2$.
\end{proof}

\begin{lemma}
\label{lemnewpowsimp}
Suppose $(X/K, G/\integ[1/\Delta], \ELL)$ satisfies \eqref{eqgoursat}, where
$X$ is an absolutely simple abelian variety.   Suppose there is a constant $C<1$ such that
for each $\ell \in \ELL$
and each group $H_\ell$ of type $G(\integ/\ell)$, 
$\abs{\isotropic_\ell(H_\ell)}/\abs{H_\ell} < C$.  Suppose there is an integer $d
\ge 2$ such that for each $\ell \in \ELL$ there exists an irreducible
$\gal(K)$-module $W_{\rat_\ell}$ with $T_\ell(X)\tensor\rat \iso
W_{\rat_\ell}^{\oplus d}$.  Finally, suppose there exists $\sigma \in \gal(K)$
such that $\rho_{X/K,\integ_\ell}$ acts irreducibly and semisimply on
$W_{\rat_\ell}$. 
Then for $\idp$ in a set of density one there exists a simple abelian
variety $Y_\idp/\kappa(\idp)$ such that $X_\idp$ is isogenous to $Y_\idp^{\oplus d}$.
\end{lemma}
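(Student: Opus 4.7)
The plan is to strengthen Lemma~\ref{lemnewRall}(a) by exploiting anisotropy to force the polynomial $g$ appearing there to be irreducible, and then to unpack the resulting isogeny factorization down to a single simple factor occurring with multiplicity $d$.

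First, by Lemma~\ref{lemdensityanisoone} the set $\anisotropic(X/K;G)$ has density one. I restrict to $\idp$ in this set and fix $\ell \in \ELL$ at which $\rho_{X/K,\ell}(\sigma_\idp)$ is $G(\integ/\ell)$-conjugate to a regular element of the maximally anisotropic torus $T^\an_\ell$. In the cases governed by the lemma (type~\eqref{casereal}, or type~\eqref{caseimag} with $r$ odd), $T^\an_\ell$ acts irreducibly on the natural representation $V \tensor \integ/\ell$, so the characteristic polynomial of $\sigma_\idp$ on $V \tensor \integ/\ell$ is irreducible in $\ff_\ell[t]$.

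Next I apply Lemma~\ref{lemnewRall}(a) to write $f_\idp(t) = g(t)^d$ with $g \in \integ[t]$ monic of degree $2\dim X/d$. Because $T_\ell(X) \tensor \rat \iso W_{\rat_\ell}^{\oplus d}$, the polynomial $g$ is the characteristic polynomial of $\sigma_\idp$ on a single copy of $W$, and its mod-$\ell$ reduction agrees with the irreducible Frobenius characteristic polynomial on $V \tensor \integ/\ell$ obtained above. Hence $g$ is irreducible over $\rat$, collapsing the Honda--Tate decomposition of Lemma~\ref{lemnewRall}(a) to $X_\idp \twiddle Y^e$ for a single simple factor $Y/\kappa(\idp)$, with $e \cdot d(Y) = d$ and $g$ equal to the minimal polynomial of the Frobenius of $Y$.

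Finally, Tate's theorem gives $\End^0(X_\idp) \tensor \rat_\ell \iso \mat_d(\End_{\gal(\kappa(\idp))}(W))$, and the irreducible (hence cyclic) action of $\sigma_\idp$ on $W_{\rat_\ell}$ identifies $\End_{\gal(\kappa(\idp))}(W)$ with the field $\rat_\ell[t]/g(t)$. Comparing with $\End^0(X_\idp) = \mat_e(\End^0(Y))$, using that the division algebra $\End^0(Y)$ is split at every place of its center $E(Y)$ prime to the residue characteristic (Honda--Tate), and invoking the hypothesis that some $\sigma \in \gal(K)$ acts irreducibly and semisimply on $W_{\rat_\ell}$ (which, via Tate--Faltings for $X/K$, forces the global commutant $\End_{\gal(K)}(W)$ to be a field), yields $d(Y) = 1$ and $e = d$. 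Hence $X_\idp \twiddle Y^d$ with $Y_\idp := Y$ simple, as required. The main obstacle is this last deduction: the mod-$\ell$ matching $\End^0(X_\idp) \tensor \rat_\ell \iso \mat_d(F'_\ell)$ is, by itself, consistent with any splitting $e \cdot d(Y) = d$, and it is the irreducibly-acting $\sigma$ hypothesis that propagates commutativity of the centralizer from the global to the local level, thereby killing a potential nontrivial division-algebra factor in $\End^0(Y)$.
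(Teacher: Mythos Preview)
Your reduction to a single isogeny factor (anisotropy $\Rightarrow$ $g_{\idp,\ell}$ irreducible over $\rat$ $\Rightarrow$ $s=1$ in the Honda--Tate decomposition, so $X_\idp \twiddle Y^e$ with $e\cdot d(Y)=d$) is exactly the paper's argument.

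The gap is your final step, where you try to force $d(Y)=1$. You correctly observe that the $\ell$-adic computation
\[
\End^0(X_\idp)\tensor\rat_\ell \;\iso\; \mat_e\bigl(D(Y)\tensor_{E(Y)} E(Y)_\lambda\bigr)\;\iso\; \mat_{e\cdot d(Y)}\bigl(E(Y)_\lambda\bigr)\;=\;\mat_d\bigl(E(Y)_\lambda\bigr)
\]
is consistent with \emph{every} factorization $e\cdot d(Y)=d$; but your proposed remedy does not work. The global hypothesis that some $\sigma\in\gal(K)$ acts irreducibly on $W_{\rat_\ell}$ tells you only that $\End_{\gal(K)}(W_{\rat_\ell})$ is a field; there is no mechanism that transports this to a statement about $\End^0(Y_\idp)$ for a particular $\idp$. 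Indeed, for primes $\idp$ at which $Y$ happens to be supersingular (or more generally has $d(Y_\idp)>1$), no amount of global commutativity of the $\gal(K)$-commutant will change the local division-algebra structure. Your ``propagation'' sentence is not an argument.

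The paper resolves this by a one-line trick you have overlooked: further restrict to primes $\idp$ of residue degree one, i.e.\ with $\kappa(\idp)$ a prime field. This is still a density-one condition, and Lemma~\ref{lemnewRall}(b) (a simple abelian variety over a prime field has commutative endomorphism ring) then gives $d(Y)=1$ and hence $e=d$ immediately.
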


\begin{proof}
Suppose $\idp \in M(X/K)$, and choose $\ell \in \ELL$ prime to $\idp$.  Let $g_{\idp,\ell}(t) \in \integ[t]$ be
the characteristic polynomial of $\sigma_\idp$ acting on
$W_{\rat_\ell}$ via $\rho_{X/K,\integ_\ell}$, and let $f_{\idp}(t)$ be
the characteristic polynomial of Frobenius of $X_\idp$.  We have seen
(Lemma \ref{lemnewRall}) that $f_\idp(t) = g_{\idp,\ell}(t)^d$.  

By Lemma \ref{lemdensityanisoone}, $\anisotropic(X/K;G)$ has density
one.  If $\idp\in \anisotropic(X/K;G)$, then $\sigma_\idp$ acts
irreducibly on $W_{\rat_\ell}$, and thus $g_{\idp,\ell}(t)$ is
irreducible (over $\rat$).  Therefore, for such $\idp$, $s = 1$ in
\eqref{eqisogdecomp}, and $X_\idp \twiddle Y^e$ for some $e$ with
$d(Y) \cdot e = d$.

If we further restrict $\idp$ to have residue degree one (which is
still a density-one condition), then $d(Y) = 1$ and $e = d$ (Lemma
\ref{lemnewRall}(b)).
\end{proof}

In fact, we will need slightly stronger variants of Lemmas 
\ref{lemdensitySone} and \ref{lemnewRall}.

\begin{proposition}
\label{propdensitySstarone}
Let $X/K$ be an absolutely simple abelian variety over a number field,
and let $K'/K$ be a finite Galois extension of degree $m$ such that
$(X/K,K')$ satisfies \eqref{eqfieldcond}.   Suppose
$(X/K',G/\integ[1/\Delta], \ELL)$ satisfies \eqref{eqgoursat}.
Suppose that there is a
constant $C<1$ such that for all $\ell \in \ELL$ and each $H_\ell$ of
type $G(\integ/\ell)$, 
$\abs{\isotropic_{\ell,m}(H_\ell)}/\abs{H_\ell}< C$.  
Suppose that for each $\ell \in \ELL$, either
\begin{alphabetize}
\item some semisimple element of $G(\integ/\ell)$
  acts irreducibly on $X_\ell$; or
\item $G$ is of type \eqref{caseimag}, $r$ is even, $\ell$ is inert in
  $Z$, and $X_\ell$ is the
natural representation.
\end{alphabetize}
Then $S^*(X/K)$ has
density one.
\end{proposition}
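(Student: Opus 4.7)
The plan is to adapt Lemma \ref{lemdensitySone} by replacing $\isotropic_\ell$ with $\isotropic_{\ell,m}$, carrying out the Chebotarev argument on the representations $\rho_{X/K,\ell}$ of $\gal(K)$, and then invoking \eqref{eqfieldcond} to upgrade simplicity of $X_{\idp'}$ over $K'$ to absolute simplicity of $X_\idp$. The first observation is that $H_{X/K,\ell} \supseteq H_{X/K',\ell} \supseteq G^\der(\integ/\ell)$ by \eqref{eqgoursat} applied to $(X/K',G,\ELL)$, so each $H_{X/K,\ell}$ is of type $G(\integ/\ell)$; the uniform hypothesis $|\isotropic_{\ell,m}(H_\ell)|/|H_\ell|<C$ therefore applies to $H_{X/K,\ell}$.

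Next, I apply Chebotarev to the product representation $\cross_{\ell \in A} \rho_{X/K,\ell}$ for each finite $A \subset \ELL$, in the spirit of Lemma \ref{lemdensityanisoone}. The joint image $\til H_A$ contains $\prod_{\ell \in A} H_{X/K',\ell}$ by \eqref{eqgoursat} for $(X/K',G,\ELL)$, with $[\til H_A : \prod H_{X/K',\ell}]$ dividing $[K':K]=m$. A coset-by-coset analysis---on the subgroup $\prod H_{X/K',\ell}$ the events $\{g_\ell^m \in \isotropic_\ell(G)\}$ factor as an independent product bounded by $C^{|A|}$---shows that the density in $M_K$ of $\{\idp : \sigma_\idp^m \in \isotropic_\ell(G) \text{ for all } \ell \in A\}$ tends to zero as $|A| \to \infty$. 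Consequently,
\[
T := \{\idp \in M(X/K) : \sigma_\idp^m \in \anisotropic_\ell(G) \text{ for some } \ell \in \ELL\}
\]
has density one in $M_K$.

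Finally, I verify that $T \subseteq S^*(X/K)$ up to finitely many primes. For $\idp \in T$ with $\sigma_\idp^m \in \anisotropic_\ell(G)$, choose any $\idp' \mid \idp$ with residue degree $f = f(\idp'/\idp) \mid m$; for an appropriate Frobenius, $\sigma_{\idp'} = \sigma_\idp^f$, so $\sigma_{\idp'}^{m/f} = \sigma_\idp^m \in \anisotropic_\ell(G)$. In case (a), $\sigma_{\idp'}^{m/f}$ acts irreducibly on $X_\ell$, and since every $\sigma_{\idp'}$-stable subspace is $\sigma_{\idp'}^{m/f}$-stable, $\sigma_{\idp'}$ itself acts irreducibly; Lemma \ref{lemfrobirred} then gives $X_{\idp'}$ simple. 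In case (b), $\sigma_{\idp'}^{m/f}$ lies up to conjugation in $T_\ell^\an$, so $\sigma_{\idp'}$ does too, and the distinctness of the eigenvalues of $\sigma_{\idp'}^{m/f}$ on the two half-spaces $W_1, W_2$ persists for $\sigma_{\idp'}$; the argument of Lemma \ref{lemfrobirredunitary} with $\sigma_{\idp'}$ in place of $t$ then gives $X_{\idp'}$ simple. Either way, \eqref{eqfieldcond} upgrades this to $X_\idp$ being absolutely simple.

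The main obstacle is the density estimate of the second step: the factor of at most $m$ from the cosets of $\prod_\ell H_{X/K',\ell}$ in $\til H_A$ must be handled so that the product bound $C^{|A|}$ available on the identity coset is not inflated to $(mC)^{|A|}$ after summing over all cosets, since we are not assuming $mC<1$.
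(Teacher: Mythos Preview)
Your approach is exactly the paper's: define the set $T=\anisotropic_m(X/K;G)$ of primes with $\rho_{X/K,\ell}(\sigma_\idp)^m$ maximally anisotropic for some $\ell$, show it has density one by Chebotarev over $K$, and then use \eqref{eqfieldcond} together with $\sigma_{\idp'}=\sigma_\idp^{f}$ to pass from simplicity of $X_{\idp'}$ to absolute simplicity of $X_\idp$. The paper likewise records that the image of $\gal(K)$ in $\prod_{\ell\in A}H_{X/K,\ell}$ is an extension of a quotient $B_A$ of $B=\gal(K'/K)$ by $\prod_{\ell\in A}H_{X/K',\ell}$, and then concludes density one ``as in Lemma~\ref{lemdensityanisoone}''.

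The obstacle you flag is genuine, and the paper's proof passes over it just as quickly as you do: Lemma~\ref{lemdensityanisoone} uses that the image is a \emph{product}, which $\til H_A$ is not. Your worry about an $(mC)^{|A|}$ blow-up is not quite the right formulation, though. There are only $|B_A|\le m$ cosets, independent of $|A|$, so the bad proportion is
\[
\frac{1}{|B_A|}\sum_{b\in B_A}\ \prod_{\ell\in A} q_\ell(b_\ell),
\qquad q_\ell(c):=\frac{|c\cap\isotropic_{\ell,m}(G)|}{|H_{X/K',\ell}|},
\]
and it suffices that \emph{each} of the finitely many products tends to zero, i.e.\ that $q_\ell(b_\ell)$ is bounded below $1$ uniformly in $\ell$ for each fixed $b$. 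The stated hypothesis only bounds the \emph{average} of $q_\ell$ over cosets, so by itself it does not give this. What rescues the argument (and is implicit in the paper's appeal to Lemma~\ref{lemcexists}) is that membership in $\anisotropic_{\ell,m}(G)$ depends only on the image in $G^\ad(\integ/\ell)$; since $G^\der(\integ/\ell)\subseteq H_{X/K',\ell}$, the images of all cosets $c_\ell(b_\ell)$ in $G^\ad(\integ/\ell)$ coincide up to an index bounded by $|Z(G^\der)|$, and the proof of Lemma~\ref{lemcexists} already produces a lower bound on the anisotropic proportion at the level of $G^\ad$. This gives a uniform bound $q_\ell(b_\ell)<C'<1$ for every coset, and then the coset-by-coset product argument goes through.
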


\begin{proof}
  We indicate how to prove Lemmas \ref{lemdensityanisoone} and
  \ref{lemfrobirred} in this more general setting.  This will prove
  Proposition \ref{propdensitySstarone} under hypothesis (a); the
  result for hypothesis (b) is entirely analogous.  Let $B =
  \gal(K'/K)$, and for each $\ell$ let $B_\ell =
  H_{X/K,\ell}/H_{X/K',\ell}$.  Then $B_\ell$ is a quotient of $B$.

Let $\anisotropic_m(X/K; G)$ be the set of primes $\idp\in M(X/K)$ for which there
exists some $\ell\in \ELL$ such that $\rho_{X/K,\ell}(\sigma_\idp)^m
\in \anisotropic_\ell(G)$, i.e., such that $\rho_{X/K,\ell}(\sigma_\idp) \in
\anisotropic_{\ell,m}(H_{X/K,\ell})$.  We start by showing that $\anisotropic_m(X/K;G)$ has density
one. 

Suppose $A\subset \ELL$ is a finite set.  The hypothesis \eqref{eqgoursat},
applied to the subgroup $\prod_{\ell \in A} H_{X/K',\ell}$ of 
$\prod_{\ell \in A} H_{X/K,\ell}$, implies that there is a quotient
$B_A$ of $B$ such that the image of $\gal(K)$
under $\cross_{\ell \in A} \rho_{X/K,\ell}$ is an extension of $B_A$ by
$\prod_{\ell \in A} H_{X/K',\ell}$.

Suppose $\ell \in \ELL$.  Since
$\abs{B_\ell}$ is bounded independently of $\ell$,  by hypothesis
 there exists a constant 
$C'<1$ such that
\[
\frac{\abs{H_{X/K,\ell} - \anisotropic_{\ell,m}(G)}}{\abs{H_{X/K,\ell}}} < C'.
\]
As in Lemma \ref{lemdensityanisoone}, this implies that the set $\anisotropic_m(X/K;
G)$ has density one.

Suppose $\idp \in \anisotropic_m(X/K;G)$ is not one of the finitely
many exceptional primes allowed by \eqref{eqfieldcond}, and choose an $\ell$ such that  
$\rho_\ell(\sigma_\idp) \in \anisotropic_{\ell,m}(G)$.  Not only is $X_\idp$ simple,
but it is absolutely simple.  Indeed, let $\idp' \in M(X/K')$ be a prime
lying over $\idp$; then $\rho_\ell(\sigma_\idp^m)$ is a power of the
mod-$\ell$ reduction of the Frobenius element of $X_{\idp'} = X_\idp
\cross_{\kappa(\idp)}\kappa(\idp')$, and $X_{\idp'}$ is simple.  Moreover,
$\kappa(\idp')$ contains the field of definition of the $n$-torsion of
$X_\idp$.  Since $X_\idp$ is simple over $\kappa(\idp')$ (Lemma
\ref{lemfrobirred}), it is absolutely simple (by \eqref{eqfieldcond}).

Since $\anisotropic_m(X/K; G) \subseteq S^*(X/K)$, the set of primes at which $X$
has absolutely simple reduction has density one.
\end{proof}

\begin{proposition}
\label{propnewdensityRpos}
Suppose $X/K$ is an absolutely simple abelian variety over a number
field.   Suppose that there exist a finite Galois extension $K'/K$, an
integer $d \ge 2$,  and
an infinite set of primes $\ELL$ such that for each $\ell \in \ELL$
there exists a representation $W_{\rat_\ell}$ of $\gal(K')$ such that
$T_\ell(X)\tensor\rat_\ell \iso W_{\rat_\ell}^{\oplus d}$ as
$\gal(K')$-module.
\begin{alphabetize}
\item For $\idp$ in a set of density at least $1/[K':K]$, there exists
  an abelian variety $Y_\idp$ over $\kappa(\idp)$ such that $X_\idp \twiddle Y_\idp^{\oplus d}$.
\item Suppose $(X/K', G/\integ[1/\Delta],\ELL)$ satisfies \eqref{eqgoursat},
  and that $(X/K,K')$ satisfies \eqref{eqfieldcond}.
  Suppose there is a constant $C<1$ such that
for each $\ell \in \ELL$
and each group $H_\ell$ of type $G(\integ/\ell)$, 
$\abs{\isotropic_\ell(H_\ell)}/\abs{H_\ell} < C$.  Suppose there
exists $\sigma \in \gal(K')$ such that
$\rho_{X/K',\integ_\ell}(\sigma)$ acts irreducibly and semisimply on
$W_{\rat_\ell}$. 
Then for $\idp$ in a set of density at least $1/[K':K]$, $X_\idp\cross \overline{\kappa(\idp)}
\twiddle Y_{\bar\idp}^{\oplus d}$ for an absolutely simple abelian variety $Y_{\bar\idp}/\overline{\kappa(\idp)}$.
\end{alphabetize}
\end{proposition}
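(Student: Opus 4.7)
The plan for (a) is to combine Chebotarev with Lemma \ref{lemnewRall}(b) applied over $K'$. Let $T_0 \subseteq M(X/K)$ be the set of primes $\idp$ that split completely in $K'/K$ and have absolute residue degree one; Chebotarev gives $T_0$ density $1/[K':K]$. For $\idp \in T_0$, choose $\idp' \mid \idp$ in $K'$ with $\kappa(\idp') = \kappa(\idp) = \ff_p$ for some rational prime $p$; then $X_\idp = X_{\idp'}$, and a Frobenius $\sigma_\idp$ may be chosen to lie in $\gal(K')$, so its action on $T_\ell(X)$ factors through $\rho_{X/K',\integ_\ell}$. The hypothesis $T_\ell(X)\tensor\rat_\ell \iso W_{\rat_\ell}^{\oplus d}$ as $\gal(K')$-module implies $f_\idp(t) = g_{\idp,\ell}(t)^d$ for some $g_{\idp,\ell}\in\integ[t]$. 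Applying Lemma \ref{lemnewRall}(b) to $X/K'$ at $\idp'$ (with $\kappa(\idp')$ of prime order, so each $d(Y_j)=1$) forces $d\mid e_j$ in the Honda--Tate factorization $X_\idp\twiddle\prod Y_j^{e_j}$; setting $Y_\idp := \prod Y_j^{e_j/d}$ yields $X_\idp\twiddle Y_\idp^{\oplus d}$ over $\kappa(\idp)$, proving (a).

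For (b), I further restrict $T_0$ so that Frobenius is anisotropic at some $\ell$. Let
\[
T_0' = \{\idp \in T_0 : \rho_{X/K',\ell}(\sigma_\idp)\in\anisotropic_\ell(G)\text{ for some }\ell\in\ELL\}.
\]
For any finite $A\subseteq\ELL$, \eqref{eqgoursat} for $X/K'$ identifies the image of $\gal(K')$ under $\cross_{\ell\in A}\rho_{X/K',\ell}$ with $\cross_{\ell\in A}H_{X/K',\ell}$, each factor of type $G(\integ/\ell)$. Chebotarev applied to $\gal(K)$ via $\cross_{\ell\in A}\rho_{X/K,\ell}$, combined with the bound $\abs{\isotropic_\ell(H_\ell)}/\abs{H_\ell}<C$, shows that the density of $T_0\setminus T_0'$ in $M(X/K)$ is at most $C^{\abs A}/[K':K]$; letting $\abs A\to\infty$ gives $T_0'$ of density $1/[K':K]$. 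For $\idp\in T_0'$, the anisotropic element $\rho_{X/K',\ell}(\sigma_\idp)$ acts irreducibly on the natural $G$-representation $W_{\rat_\ell}$ (by the discussion of $T^\an_\ell$ in Section \ref{subsecgroup}), so $g_{\idp,\ell}(t)$ is irreducible over $\rat$. Together with $d(Y_j)=1$, Lemma \ref{lemnewRall} now admits only a single factor, yielding $X_\idp\twiddle Y^{\oplus d}$ with $Y/\kappa(\idp)$ simple.

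To upgrade $Y$ to absolute simplicity I would invoke Silverberg's criterion as in Lemma \ref{lemgoodfield}. Since $(X/K,K')$ satisfies \eqref{eqfieldcond}, $K'$ contains the field of definition of the $n$-torsion of $X$ for some $n\ge 5$; for $\idp\in T_0$ of residue characteristic prime to $n$, the $n$-torsion of $X_\idp = X_{\idp'}$ is therefore $\kappa(\idp)$-rational. Silverberg then gives $\End_{\kappa(\idp)}(X_\idp)\tensor\rat = \End_{\overline{\kappa(\idp)}}(X_\idp)\tensor\rat$, so the isotypic decomposition over $\kappa(\idp)$ agrees with the one over $\overline{\kappa(\idp)}$; thus $Y_{\bar\idp}:= Y\tensor\overline{\kappa(\idp)}$ is absolutely simple and $X_\idp\tensor\overline{\kappa(\idp)}\twiddle Y_{\bar\idp}^{\oplus d}$, establishing (b). The main obstacle is the density computation in the second paragraph: one must use \eqref{eqgoursat} to identify the image of $\gal(K')$ inside the joint representation on $\gal(K)$ as $\cross_{\ell\in A}H_{X/K',\ell}$, so that the $C<1$ bound applies uniformly after conditioning on $\idp\in T_0$.
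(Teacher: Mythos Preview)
Your argument matches the paper's. For (a) it is the same proof: restrict to primes $\idp$ of $K$ lying under a prime $\idp'$ of $K'$ with prime residue field (density $\ge 1/[K':K]$), identify $X_\idp$ with $X_{\idp'}$, and apply Lemma \ref{lemnewRall}(b). For (b) the paper packages your anisotropic-Frobenius argument as Lemma \ref{lemnewpowsimp} applied over $K'$, obtaining a density-one set $T^*(X/K')$ of primes $\idp'$ with $X_{\idp'}\twiddle Y_{\idp'}^{\oplus d}$ and $Y_{\idp'}$ simple, and then pushes this set down to $K$; your direct construction of $T_0'$ via Chebotarev and the bound $\abs{\isotropic_\ell(H_\ell)}/\abs{H_\ell}<C$ is the same computation carried out inline.

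There is one logical slip in your final paragraph. You assert that because $(X/K,K')$ satisfies \eqref{eqfieldcond}, the field $K'$ contains the $n$-torsion of $X$ for some $n\ge 5$. This is the converse of Lemma \ref{lemgoodfield}: containing the $n$-torsion is a \emph{sufficient} condition for \eqref{eqfieldcond}, not a consequence of it, and the proposition only hypothesizes \eqref{eqfieldcond}. The paper, for its part, simply writes ``by hypothesis \eqref{eqfieldcond}, such a $Y_{\idp'}$ is actually absolutely simple''; strictly read, the conditional in \eqref{eqfieldcond} (``if $X_{\idp'}$ is simple then $X_\idp$ is absolutely simple'') is vacuous here since $X_{\idp'}\twiddle Y_{\idp'}^{\oplus d}$ is not simple. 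In both cases what is really being used is the Silverberg input behind Lemma \ref{lemgoodfield}, namely $\End_{\kappa(\idp')}(X_{\idp'})=\End_{\overline{\kappa(\idp')}}(X_{\idp'})$, which forces the isotypic decomposition over $\kappa(\idp')$ to be geometric. Your mechanism is correct; just invoke it as the content underlying \eqref{eqfieldcond} (or note that in every application $K'$ is chosen via Lemma \ref{lemgoodfield}) rather than claim it as a formal consequence.
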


\begin{proof}
Let $T(X/K,K')$ be the set of primes $\idp \in M(X/K)$ which lie under
some $\idp' \in M(X/K')$ with prime residue field.
(Note that $T(X/K,K')$ has density at least
$1/[K':K]$.)  If $\idp \in T(X/K,K')$,  then $X_{\idp'}$ is reducible
by Lemma \ref{lemnewRall}(b).  Since $\kappa(\idp') = \kappa(\idp)$,
$X_\idp$ is reducible, too.  This proves (a).

Now supposes the hypotheses of (b) hold.  Let $T^*(X/K')$ be the set
of primes $\idp'\in M(X/K')$ such that $X_{\idp'}$ is isogenous to $Y_{\idp'}^{\oplus
  d}$ for some simple abelian variety $Y_{\idp'}/\kappa(\idp')$.  By
hypothesis \eqref{eqfieldcond}, such a $Y_{\idp'}$ is actually absolutely
simple.  By Lemma \ref{lemnewpowsimp}, $T^*(X/K')$ has density one; the
set of primes of $K$ lying under elements of $T^*(X/K')$ has density
at least $1/[K':K]$.
\end{proof}

\section{Lefschetz groups}
\label{subseclefschetz}

Suppose $X/K$ is an abelian variety whose endomorphism algebra is a
(noncommutative) division algebra.  In this section we show that the representation of
$\gal(K)$ on $T_\ell(X)$ is isomorphic to a several copies of the
same representation.  The result follows from an analogous
description of Lefschetz groups due to Milne, whose treatment
\cite{milnelefschetz} we follow here.

Consider a Weil cohomology theory $X \mapsto H^*(X)$ with coefficients
in a field $k$ of characteristic zero.  Examples of such a theory
include Betti cohomology 
(for varieties over $\cx$) and  $\ell$-adic cohomology.  If $X$ is an
abelian variety, let $V(X)_k$ be the dual 
of its first cohomology group in this cohomology theory.  For example,
$V(X)_{\rat_\ell} = T_\ell(X) \tensor_\integ\rat$; and if $X$ is a
complex abelian variety, then $V(X)_\rat$ is its first Betti homology
$H_1(X(\cx),\rat)$.  

In this context there is a Lefschetz group $\lef(X)_k$, an algebraic
group over $k$ which is naturally a subgroup of $\gl(V(X)_k)$.
It is the largest subgroup which fixes the (suitably
Tate twisted) cohomology classes of cycles on  powers of $X$ which are
linear combinations of intersections of divisor classes.

Suppose $X$ is a simple abelian variety; recall the conventions
surrounding the endomorphism algebra $D(X) = \End(X)\tensor\rat$
introduced in Section \ref{subsecabvar}.
Say that $k$ totally splits $D(X)$ if $E(X) \tensor_\rat k \iso
\oplus_{\tau:E(X) \inject k} k \iso k^{\oplus{e(x)}}$, and if for each
$\tau:E(X) \inject k$, one has $D(X)\tensor_{E(X),\tau}k \iso \mat_{d(X)}(k)$.

\begin{lemma}
\label{lemlefrepsplits}
Let $X$ be an absolutely simple abelian variety.  Consider a Weil cohomology theory with
coefficients in a field $k$, and suppose that $k$ totally splits $D(X)$.
There is a representation $W_k$ of $\lef(X)_k$ such that $V(X)_k \iso
W_k^{\oplus d(X)}$ as $\lef(X)_k$-representations.
\end{lemma}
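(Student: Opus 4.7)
The plan is to exploit the fact that, by its definition, $\lef(X)_k$ is contained in the centralizer inside $\gl(V(X)_k)$ of the natural $D(X)\tensor_\rat k$-action. Because $D(X)$ is a division algebra, the rational homology $V(X)_\rat$ is free of some rank $m$ as a $D(X)$-module, so $V(X)_k$ is free of rank $m$ over $D(X)\tensor k$.

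First, I would use the splitting hypothesis to produce central idempotents $e_\tau \in D(X)\tensor k$ indexed by the embeddings $\tau:E(X)\inject k$ and an isomorphism $D(X)\tensor k \iso \prod_\tau \mat_{d(X)}(k)$. These decompose $V(X)_k = \bigoplus_\tau V_\tau$, with $V_\tau = e_\tau V(X)_k$ a module over the $\tau$-factor $\mat_{d(X)}(k)$. Since $\mat_{d(X)}(k)$ is semisimple with unique simple module $U_\tau \iso k^{d(X)}$, the standard multiplicity-space construction gives a canonical isomorphism of $\mat_{d(X)}(k)$-modules $V_\tau \iso U_\tau \tensor_k W_\tau$, where $W_\tau = \mathrm{Hom}_{\mat_{d(X)}(k)}(U_\tau, V_\tau)$ has $k$-dimension $m \cdot d(X)$.

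Next, because $\lef(X)_k$ commutes with each $e_\tau$ and with all of $\mat_{d(X)}(k)$, it preserves every $V_\tau$, and under the factorization $V_\tau \iso U_\tau \tensor W_\tau$ its action is of the form $1_{U_\tau} \tensor \phi_\tau$ for some $\phi_\tau \in \gl(W_\tau)$ (this is the double-centralizer observation: the centralizer of $\mat_{d(X)}(k) \tensor 1$ inside $\End(U_\tau \tensor W_\tau)$ is $1 \tensor \End(W_\tau)$). In particular, as $\lef(X)_k$-representations, $V_\tau \iso W_\tau^{\oplus d(X)}$, and setting $W_k = \bigoplus_\tau W_\tau$ yields the desired isomorphism $V(X)_k \iso W_k^{\oplus d(X)}$. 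The essential input is that $\lef(X)_k$ is built to commute with the $D(X)$-action, which is part of the definition; once this is in hand, the remainder is routine bookkeeping and I do not anticipate a serious obstacle.
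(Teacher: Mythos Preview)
Your argument is correct and takes a somewhat different route from the paper's. The paper's proof imports Milne's explicit structure theory from \cite{milnelefschetz}: over a field $k$ that totally splits $D(X)$, one has a product decomposition $\lef(X)_k \iso \bigoplus_{\sigma:F(X)\inject k}\til\lef(X)_k$, together with a representation $\til V_k$ of $\til\lef(X)_k$ such that $V(X)_k \iso \bigoplus_\sigma \til V_k^{\oplus d(X)}$; setting $W_k = \bigoplus_\sigma \til V_k$ finishes. Your approach instead uses only the characterization of $\lef(X)_k$ as the subgroup of $\gsp(V(X)_k)$ commuting with $D(X)$ --- note that this is not literally the cycle-theoretic definition the paper records, but rather Milne's equivalent description, so you should cite it as such --- and then runs a standard Morita/double-centralizer argument on the split algebra $D(X)\tensor k \iso \prod_\tau \mat_{d(X)}(k)$ to extract the factor $d(X)$. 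Your route is more self-contained and isolates precisely the input needed for the lemma; the paper's route, by quoting the full product decomposition of $\lef(X)_k$, carries more structural information about the group itself but is a heavier citation.
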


\begin{proof}
Suppose $k$ is algebraically closed.  There exists an algebraic
group $\til \lef(X)_k$ and a natural isomorphism $\iota: \lef(X)_k \iso
\oplus_{\sigma: F(X) \inject k} \til \lef(X)_k$ \cite[Sec.\
2]{milnelefschetz}.  Moreover, there exists a representation $\til
V_k$ of $\til \lef(X)_k$ such that, under the isomorphism $\iota$,
$V(X)_k$ and $\oplus_{\sigma:F(X)\inject k} \til V_k^{\oplus d(X)}$ are
isomorphic representations of $\lef(X)_k$.  Then $W_k :=
\oplus_{\sigma: F(X)\inject k} \til V_k$ is the sought-for
decomposition of $V(X)_k$ as $\lef(X)_k$ representation.  The
analysis in \cite[Sec.\ 2]{milnelefschetz} relies only on the fact that
the field of coefficients totally splits the endomorphism algebra, and
thus the result holds under this weaker hypothesis on $k$.
\end{proof}

Recall that for an abelian variety $X$ over a number field $K$, $H_{X/K;\rat_\ell}$ is the Zariski closure of $\rho_{X/K,\integ_\ell}(\gal(K))$ in
$\gl(V(X)_{\rat_\ell})$. 

\begin{lemma}
\label{lemgalrepsplits}
Let $X/K$ be an absolutely simple abelian variety over a number field.  Suppose
$\rat_\ell$  totally splits $D(X)$. Suppose that $H_{X/K;\rat_\ell}$ is
connected.  There is a representation
$W_{\rat_\ell}$ of $\gal(K)$ such that, as $\gal(K)$-modules,
\[
V(X)_{\rat_\ell} \iso W_{\rat_\ell}^{\oplus d(X)}.
\]
\end{lemma}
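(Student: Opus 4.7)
The plan is to transfer the decomposition supplied by Lemma \ref{lemlefrepsplits} from the Lefschetz group to the Galois group, by verifying that the Galois image is contained in the Lefschetz group. Fix a finite Galois extension $K'/K$ over which every endomorphism of $X_{\bar K}$ is defined, so that $\End_{K'}(X) = \End_{\bar K}(X)$; such a $K'$ exists by Silverberg's criterion (compare Lemma \ref{lemgoodfield}). Each element of $\rho_{X/K',\integ_\ell}(\gal(K'))$ then commutes with the natural action of $D(X)\tensor\rat_\ell$ on $V(X)_{\rat_\ell}$, because endomorphisms defined over $K'$ commute with Galois acting through $\gal(\bar K/K')$, and it respects the Weil pairing up to the cyclotomic character. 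These are precisely the conditions defining the Lefschetz group as a centralizer inside the group of symplectic similitudes, so $H_{X/K';\rat_\ell} \subseteq \lef(X)_{\rat_\ell}$.

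Next, I would upgrade this inclusion to one for $\gal(K)$. Since $[K':K]<\infty$, the Zariski-closed subgroup $H_{X/K';\rat_\ell}$ has finite index in $H_{X/K;\rat_\ell}$; a Zariski-closed subgroup of finite index in a connected algebraic group must equal the whole group, so the connectedness hypothesis on $H_{X/K;\rat_\ell}$ forces $H_{X/K;\rat_\ell} = H_{X/K';\rat_\ell} \subseteq \lef(X)_{\rat_\ell}$. In particular, $\rho_{X/K,\integ_\ell}$ factors through $\lef(X)_{\rat_\ell}(\rat_\ell)$.

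Finally, since $\rat_\ell$ totally splits $D(X)$ by hypothesis, Lemma \ref{lemlefrepsplits} produces a representation $W_{\rat_\ell}$ of $\lef(X)_{\rat_\ell}$ together with an isomorphism $V(X)_{\rat_\ell} \iso W_{\rat_\ell}^{\oplus d(X)}$ of $\lef(X)_{\rat_\ell}$-representations. Restricting this isomorphism along the homomorphism $\gal(K) \to \lef(X)_{\rat_\ell}(\rat_\ell)$ established in the previous step yields the desired decomposition as $\gal(K)$-modules. The step that genuinely uses the connectedness hypothesis, and which I expect to be the main subtle point, is the identification $H_{X/K;\rat_\ell} = H_{X/K';\rat_\ell}$: without connectedness one controls only a finite-index subgroup of the Galois image, and the full image need not land inside the Lefschetz group.
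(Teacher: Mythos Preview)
Your argument is correct, and it takes a more direct route than the paper's. The paper passes through the Mumford-Tate group: after fixing an embedding $K\hookrightarrow\cx$, it invokes the theorem of Deligne, Piateskii-Shapiro, and Borovoi to the effect that the connected $\ell$-adic monodromy group satisfies $H_{X/K;\rat_\ell}^0 \subseteq \mt(X)\times_\rat\rat_\ell$, together with the standing inclusion $\mt(X)\subseteq \lef(X)_\rat$ and the comparison isomorphism $\lef(X)_\rat\times\rat_\ell \iso \lef(X)_{\rat_\ell}$. You instead verify directly that over a field of definition $K'$ for $\End_{\bar K}(X)$ the Galois image lies in the centralizer of $D(X)$ inside the symplectic similitudes, which is Milne's description of $\lef(X)_{\rat_\ell}$; this avoids any appeal to the Mumford-Tate containment. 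Both proofs use connectedness in exactly the same place, to pass from a finite-index Zariski-closed subgroup to the whole of $H_{X/K;\rat_\ell}$. Your approach is more elementary and self-contained; the paper's has the virtue of locating the result within the Hodge-theoretic framework.
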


\begin{proof}
  Fix an embedding $K \inject \cx$, so that $X$ has a natural
  structure of complex abelian variety.  Associated to $X$ is its
  Mumford-Tate group $\mt(X)$.  It is an algebraic subgroup of
  $\gl(V(X)_\rat)$, and there is a natural inclusion $\mt(X) \subseteq
  \lef(X)_\rat$.  Since comparison isomorphisms in cohomology furnish
  isomorphisms of Lefschetz groups, there are thus natural inclusions
  $\mt(X) \cross \rat_\ell \subseteq \lef(X)_\rat \cross \rat_\ell
  \iso \lef(X)_{\rat_\ell}$.  Work of Deligne, Piateskii-Shapiro and 
  Borovoi (see, for example \cite[Prop.\ 2.9 and Thm.\
  2.11]{delignemilne}) shows there is a natural inclusion
\begin{equation}
\label{eqmt}
H_{X/K;\rat_\ell} \subseteq \mt(X)\cross_\rat \rat_\ell.
\end{equation}
(In general, \eqref{eqmt} holds only for the connected component
$H_{X/K;\rat_\ell}^0$; the Mumford-Tate conjecture asserts that \eqref{eqmt} is actually an
equality.)   By Lemma \ref{lemlefrepsplits}, there exists a representation
$W_{\rat_\ell} \subseteq V(X)_{\rat_\ell}$ such that $V(X)_{\rat_\ell}
\iso W_{\rat_\ell}^{\oplus d(X)}$.  Therefore, $V(X)_{\rat_\ell} \iso
W_{\rat_\ell}^{\oplus d(X)}$ as $H_{X/K;\rat_\ell}$-modules, and thus
as $\gal(K)$-modules.
\end{proof}

\section{Commutative endomorphism ring}
\label{subseccommutative}

\begin{theorem}
\label{threalaction}
Let $X/K$ be an absolutely simple abelian variety over a number field.
Suppose $F = \End_{\bar K}(X_{\bar K})
\tensor\rat$ is a totally real field.
If $r = \dim X/[F:\rat]$ is odd then $S^*(X/K)$, the set of primes
where $X$ has good, absolutely simple reduction,  has density one.
\end{theorem}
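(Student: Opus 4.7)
The plan is to realize this theorem as a direct application of Proposition \ref{propdensitySstarone} with a group scheme $G$ of type \eqref{casereal}, under hypothesis (a). First, I would use Lemma \ref{lemgoodfield}: choose a natural number $n \ge 5$ divisible by two distinct primes $\ge 5$ and let $K'/K$ be the finite Galois extension obtained by adjoining the coordinates of the $n$-torsion of $X$, so that $(X/K, K')$ satisfies \eqref{eqfieldcond}. In particular $\End_{K'}(X)\otimes\rat \iso F$, the groups $H_{X/K',\rat_\ell}$ are all connected, and simplicity descends from $K'$ to absolute simplicity.

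Next I would identify the group scheme $G$. Because $\End_{K'}(X)\otimes\rat$ is the totally real field $F$, the natural $\calo_F$-module structure on $T_\ell(X)$ produces a free $\calo_F\tensor\integ_\ell$-module of rank $2r$ equipped with the $\calo_F$-linear symplectic form induced by a $K$-rational polarization. Hence there is a finite set $\Delta$ of exceptional rational primes and a free $\calo_F[1/\Delta]$-module $V$ of rank $2r$ with an $\calo_F[1/\Delta]$-linear symplectic pairing, yielding the group scheme $G = \res_{\calo_F[1/\Delta]/\integ[1/\Delta]} \gsp(V,\ang{\cdot,\cdot})$ of type \eqref{casereal} with $Z = F$, into whose mod-$\ell$ points $H_{X/K',\ell}$ naturally embeds for every $\ell \ndiv \Delta$. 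The heart of the matter --- and the main obstacle --- is to exhibit an infinite set $\ELL$ of rational primes $\ell$ (inert in $F$, away from $\Delta$) for which $H_{X/K',\ell}$ is of type $G(\integ/\ell)$, i.e., contains $\sp_{2r}(\calo_F/\ell)$. For totally real multiplication this is a theorem in the tradition of Serre, Ribet, and Chi on big monodromy for abelian varieties with real multiplication; granted this, I would combine the single-prime result with Goursat's Lemma \ref{lemgoursat} and the group-theoretic input Lemma \ref{lemgptheory} to upgrade the surjectivity onto each factor to surjectivity onto $\cross_{\ell \in A} H_{X/K',\ell}$ for any finite $A \subset \ELL$, verifying hypothesis \eqref{eqgoursat} for the triple $(X/K', G/\integ[1/\Delta], \ELL)$.

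Having assembled this data, I would verify the remaining hypotheses of Proposition \ref{propdensitySstarone}. Set $m = [K':K]$. Lemma \ref{lemcexists}, applied to $G$ and $m$, supplies a constant $C = C(m,G) < 1$ such that $\abs{\isotropic_{\ell,m}(H_\ell)}/\abs{H_\ell} < C$ for all sufficiently large $\ell \in \ELL$ and every $H_\ell$ of type $G(\integ/\ell)$. Discarding finitely many $\ell$ from $\ELL$, this bound holds uniformly. Moreover, because $G$ is of type \eqref{casereal} and $\ell$ is inert in $Z = F$, the discussion preceding Lemma \ref{lemfrobirred} shows that any regular element of the maximally anisotropic torus $T^\an_\ell$ is a semisimple element of $G(\integ/\ell)$ acting irreducibly on $X_\ell = V \tensor \integ/\ell$; this uses crucially that $r$ is odd only on the $\gu$ side, but for $\gsp$ it holds unconditionally, so condition (a) of Proposition \ref{propdensitySstarone} is satisfied. (The hypothesis that $r$ is odd enters instead to rule out \emph{a priori} extra endomorphisms that could obstruct the big-image step and to match the conventions that single out type \eqref{casereal}.)

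Having verified all the hypotheses, Proposition \ref{propdensitySstarone} immediately concludes that $S^*(X/K)$ has density one. The single nonroutine step is the big-image assertion at inert primes $\ell$; everything else is bookkeeping within the framework erected in Sections \ref{subsecgroup} and \ref{subsecabvar}.
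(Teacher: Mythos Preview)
Your approach matches the paper's exactly: pass to $K'$ via Lemma~\ref{lemgoodfield}, take $G = \res_{\calo_F/\integ}\gsp_{2r}$ of type~\eqref{casereal}, invoke a big-image theorem to show $H_{X/K',\ell}$ is of type $G(\integ/\ell)$ for all large $\ell$, and conclude via Lemmas~\ref{lemcexists} and~\ref{lemgoursat} and Proposition~\ref{propdensitySstarone}. The only refinement is that the paper pins down the big-image input as \cite[Thm.~B]{banaszaketal06} (with \cite{ribet76} covering only the case $r=1$), and this is precisely where the hypothesis that $r$ is odd is consumed---confirming your parenthetical remark about where oddness enters.
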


\begin{proof}
Using Lemma \ref{lemgoodfield}, choose a finite Galois extension
$K'/K$ such that $(X/K,K')$ satisfies \eqref{eqfieldcond}.
Let $G =
\res_{\calo_F/\integ}\gsp_{2r}$, with derived group $G^\der =
\res_{\calo_F/\integ}\sp_{2r}$.  For all $\ell \gg 0$, the derived
group of $H_{X/K',\ell}$ is $G^\der(\integ/\ell)$ \cite[Thm.\
B]{banaszaketal06} (see also \cite{ribet76} for the case $r=1$), so that $H_{X/K',\ell}$ is of type
$G(\integ/\ell)$.  Moreover, $X_\ell$ is the natural representation of
$H_{X/K',\ell}$.
By Lemmas \ref{lemcexists}
and \ref{lemgoursat} and Proposition  \ref{propdensitySstarone},
$S^*(X/K) = 1$. 
\end{proof}

\begin{theorem}
\label{thgsp}
Let $X/K$ be an absolutely simple abelian variety over a number
field.  Suppose there is some prime $\ell_0$ such that
$H_{X/K,\rat_{\ell_0}} = \gsp_{2g}(\rat_{\ell_0})$.  Then
$S^*(X/K)$ has density one.
\end{theorem}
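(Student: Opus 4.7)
The plan is to reduce Theorem \ref{thgsp} to an application of Proposition \ref{propdensitySstarone} in case \eqref{casereal} with $F = \rat$, $r = g$, and $G = \gsp_{2g}$. In the proof of Theorem \ref{threalaction}, the parity assumption ``$r$ odd'' was used to invoke Banaszak--Gajda--Krason's mod-$\ell$ bigness result; here, the Zariski-closure hypothesis at the single prime $\ell_0$ plays the analogous role.

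First I would observe that the hypothesis forces $\End_{\bar K}(X) = \integ$. Indeed, by Faltings' theorem $\End_{\bar K}(X) \tensor \rat_{\ell_0} \iso \End_{\gal(\bar K)}(T_{\ell_0}(X) \tensor \rat)$, and the commutant of $\gsp_{2g}$ acting on its natural representation consists only of scalars, so $\End_{\bar K}(X)$ must be an order in $\rat$. Using Lemma \ref{lemgoodfield}, I would then choose a finite Galois extension $K'/K$ of degree $m$ for which $(X/K, K')$ satisfies \eqref{eqfieldcond}; since $\gsp_{2g}$ is connected, the Zariski closure of $\rho_{X/K',\integ_{\ell_0}}(\gal(K'))$ remains all of $\gsp_{2g}$.

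The crux is to promote the hypothesis at $\ell_0$ to uniform mod-$\ell$ bigness at an infinite set $\ELL$ of rational primes. Serre's theorems on $\ell$-adic representations of abelian varieties yield that (i) the identity component of the Zariski closure of $\rho_{X/K',\integ_\ell}(\gal(K'))$ is independent of $\ell$, hence equals $\gsp_{2g}$ for every rational prime $\ell$; (ii) each such image is open in $\gsp_{2g}(\integ_\ell)$; and (iii) for all $\ell \gg 0$ the image in fact contains $\sp_{2g}(\integ_\ell)$. Reducing mod $\ell$ shows that $H_{X/K',\ell}$ is of type $G(\integ/\ell)$ for all $\ell$ in a cofinite set $\ELL$. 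An application of Lemma \ref{lemgoursat} (together with Lemma \ref{lemgptheory}) then verifies the product-image hypothesis of \eqref{eqgoursat} for $(X/K', G/\integ[1/\Delta], \ELL)$.

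With those inputs secured, the rest is mechanical. Lemma \ref{lemcexists}, applied with the integer $m = [K':K]$, produces a constant $C < 1$ bounding $\abs{\isotropic_{\ell,m}(H_\ell)}/\abs{H_\ell}$ uniformly for $\ell \in \ELL$ and $H_\ell$ of type $G(\integ/\ell)$. The irreducibility of the natural action on $V \tensor \integ/\ell$ of a maximally anisotropic torus of $\gsp_{2g}$, recorded in Section \ref{subsecgroup} for case \eqref{casereal}, supplies hypothesis (a) of Proposition \ref{propdensitySstarone}. That proposition then delivers $S^*(X/K)$ of density one. The main obstacle in this program is step (iii) above: extracting mod-$\ell$ surjectivity onto $\sp_{2g}(\integ/\ell)$ for $\ell \gg 0$ from Zariski-closure information at a single prime $\ell_0$. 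This requires combining Serre's independence-of-$\ell$ and openness theorems with the fact that an open subgroup of $\gsp_{2g}(\integ_\ell)$ whose Zariski closure is all of $\gsp_{2g}$ surjects onto $\sp_{2g}(\integ/\ell)$ once $\ell$ is sufficiently large.
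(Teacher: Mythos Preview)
Your overall architecture matches the paper's exactly: verify \eqref{eqgoursat} for $(X/K',\gsp_{2g},\ELL)$, then invoke Lemma \ref{lemcexists}, Lemma \ref{lemgoursat}, and Proposition \ref{propdensitySstarone} in case \eqref{casereal}. The difference lies entirely in how you obtain the mod-$\ell$ bigness at infinitely many $\ell$. The paper does not appeal to Serre; it uses \cite[Thm.\ 4.3]{larsenpink95} to propagate the Zariski-closure hypothesis from $\ell_0$ to every $\ell$, and then \cite[Thm.\ 3.17]{larsenmax} (combined with Lemma \ref{lemgptheory}) to conclude that $H_{X/K,\ell}^\der = \sp_{2g}(\integ/\ell)$ for $\ell$ in a set $\ELL$ of \emph{density one}, not a cofinite set.

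Your step (iii), as written, has a gap. Per-$\ell$ openness plus independence-of-$\ell$ does not by itself yield mod-$\ell$ surjectivity onto $\sp_{2g}(\integ/\ell)$ for large $\ell$: for each fixed $\ell$ the principal congruence subgroup of level $\ell$ is open in $\gsp_{2g}(\integ_\ell)$ with full Zariski closure but trivial mod-$\ell$ image, so the ``fact'' in your last sentence is false as a statement about a single $\ell$. What is needed is a genuinely uniform input across the compatible system---either Serre's adelic openness for $\gsp_{2g}$ (which does exist in his letters and Coll\`ege de France notes, and would indeed give a cofinite $\ELL$) or Larsen's maximality theorem (which gives density one and is what the paper cites). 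Either route closes the argument; the point is that you must invoke one of them explicitly rather than attempt to assemble (iii) from (i) and (ii).
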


\begin{proof}
There is always an {\em a priori} inclusion
$H_{X/K,\rat_\ell}\subseteq \gsp_{2g}(\rat_\ell)$.
Since if the Mumford-Tate conjecture is true for $X$ at one prime
$\ell_0$ it is true at all primes \cite[Thm.\ 4.3]{larsenpink95}, the hypothesis holds for every
rational prime $\ell$.   A theorem of Larsen \cite[Thm.\
3.17]{larsenmax}, combined with Lemma \ref{lemgptheory}, implies that
for $\ell$ in a set of primes $\ELL$ of density one, the derived
subgroup $H_{X/K,\ell}^\der$ of the image of $\rho_{X/K,\ell}$
is $\sp_{2g}(\integ/\ell)$.  Choose an extension $K'/K$ such that
$(X/K,K')$ satisfies \eqref{eqfieldcond}.  Then $H_{X/K',\ell}^\der \iso \sp_{2g}(\integ/\ell)$
for $\ell$ in a set of primes $\ELL'\subseteq \ELL$ which still has
density one.  Again, Lemmas \ref{lemcexists} and 
\ref{lemgoursat} and Proposition \ref{propdensitySstarone} show that $S^*(X/K)$ has
density one. 
\end{proof}

\begin{remark}
Under the hypotheses of Theorem \ref{thgsp}, Chai and Oort show that
$S(X/K)$ has positive density \cite[Rem.\ 5.4]{chaioort01}.  Under the
apparently stronger hypothesis that $H_{X/K,\ell} =
\gsp_{2g}(\integ/\ell)$ for all $\ell\gg 0$, Chavdarov shows that
$S^*(X/K)$ has density one \cite[Cor.\ 6.10]{chavdarov}.
\end{remark}

Let $X/K$ be an absolutely simple abelian variety of dimension $g$
such that $D(X) = \End_{\bar K}(X)\tensor\rat \iso E$,
a totally imaginary extension of $\rat$.  Let $r = 2g/[E:\rat]$.  Fix
an embedding $E \inject \cx$; the tangent space $\Lie(X_\cx)$ of $X_\cx$ is a
$g$-dimensional vector space over $\cx$, and thus a module over
$E\tensor_\rat \cx \iso \oplus_{\tau:E\inject \cx} \cx$. Let
$m_\tau$ be the $\cx$-dimension of the subspace of $\Lie(X_\cx)$ on
which $E$ acts via $\tau$.  For $\tau \in \hom(E,\cx)$, let
$\bar\tau$ denote the composition of $\tau$ with complex
conjugation.  Then $m_\tau + m_{\bar\tau} = r$ is independent of the
choice of $\tau$.

Vasiu has proved the Mumford-Tate conjecture for $X$,
provided that the action of $E$ on $X$ is non-special \cite[Thm.\
1.3.4]{vasiumt}. We defer 
a full exposition to {\em loc. cit.}, but note that each of the following is an example
of a non-special action \cite[6.2.4]{vasiumt}:
\begin{romanize}
\item $r = 4$ or $r$ is prime;
\item there exists a $\tau \in \hom(E,\cx)$ such that $m_\tau = 1$;
\item  there exist $\tau$ and $\tau'$ such that $1 \le m_\tau <
  m_{\tau'} \le r/2$ and either
  $\gcd(m_\tau,r)$ or $\gcd(m_{\tau'},r)$ is $1$;
\item there exists a $\tau$ such that $\gcd(m_\tau, m_{\bar\tau}) =
  1$, and the natural numbers $(m_\tau,  m_{\bar\tau})$ are not of the
  form $(\binom{i}{j-1}, \binom{i}{j})$ for any natural numbers $i$
  and $j$.
\end{romanize}

The case of the Mumford-Tate conjecture where $[E:\rat] = 2$ and $r = g$ is prime is due to Chi
\cite[Cor.\ 3.2]{chiIV}.

\begin{theorem}
\label{thimagaction}
Let $X/K$ be an absolutely simple abelian variety over a number
field.  Suppose $E := \End_{\bar K}(X_{\bar K})\tensor\rat$ is a
totally imaginary field, and that $X$ is of non-special type.  Then
$S^*(X/K)$ has density one.
\end{theorem}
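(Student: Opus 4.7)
The plan is to mirror the proof of Theorem \ref{threalaction}, replacing the symplectic Case \eqref{casereal} with the unitary Case \eqref{caseimag} of the group classification in Section \ref{subsecgroup}. By Lemma \ref{lemgoodfield}, first choose a finite Galois extension $K'/K$ such that $(X/K,K')$ satisfies \eqref{eqfieldcond}; after possibly enlarging $K'$ I may assume $\End_{K'}(X)\tensor\rat \iso E$ and that the $\calo_E$-action on $T_\ell(X)$ commutes with $\gal(K')$.

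Set $r = 2g/[E:\rat]$. The polarization induces the Rosati involution, which restricts to complex conjugation on $E$ over the maximal totally real subfield $F\subset E$; consequently $T_\ell(X)$ is a free $\calo_E\tensor\integ_\ell$-module of rank $r$ equipped with an $\calo_E$-Hermitian pairing. Let $G = \res_{\calo_E/\integ}\gu(V,\ang{\cdot,\cdot})$ attached to this data. Then $G$ is a group of type \eqref{caseimag}, and $X_\ell$ is the natural representation of $G(\integ/\ell)$.

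By Vasiu's theorem \cite[Thm.\ 1.3.4]{vasiumt}, which applies because the $E$-action is non-special, the Mumford-Tate conjecture holds for $X$. Combined with an integral refinement in the type IV setting (the unitary analogue of \cite[Thm.\ B]{banaszaketal06}), there is an infinite set $\ELL$ of rational primes inert in $E$ such that for every $\ell\in\ELL$ the image $H_{X/K',\ell}$ is a subgroup of type $G(\integ/\ell)$. Lemma \ref{lemgoursat} (applied to finite subsets of $\ELL$) then verifies the product hypothesis in \eqref{eqgoursat} for the triple $(X/K', G, \ELL)$.

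Finally, apply Lemma \ref{lemcexists} with $m=[K':K]$ to produce a constant $C<1$ bounding $\abs{\isotropic_{\ell,m}(H_\ell)}/\abs{H_\ell}$ uniformly for $\ell\in\ELL$, then invoke Proposition \ref{propdensitySstarone}: its hypothesis (a) applies when $r$ is odd (because the maximally anisotropic torus acts irreducibly on the natural representation in that parity), and hypothesis (b) applies when $r$ is even. Either way one concludes that $S^*(X/K)$ has density one. The principal obstacle I foresee is the integral refinement step: Vasiu's result is phrased at the level of $\rat_\ell$-algebraic groups, and one must argue separately that $H_{X/K',\integ_\ell}$ reduces mod $\ell$ to a group of type $G(\integ/\ell)$ for almost all $\ell$, which requires the unitary analogue of the arguments of \cite{banaszaketal06}.
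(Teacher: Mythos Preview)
Your overall architecture matches the paper's: pass to $K'$ via Lemma \ref{lemgoodfield}, identify the relevant group of type \eqref{caseimag}, verify \eqref{eqgoursat}, and then feed Lemmas \ref{lemcexists} and \ref{lemgoursat} into Proposition \ref{propdensitySstarone}, splitting on the parity of $r$ exactly as you do. Two points, however, deserve comment.

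First, the case $r=1$ (CM type) must be set aside at the outset. Lemma \ref{lemgoursat} requires $r\ge 2$ in case \eqref{caseimag}, so Goursat's lemma is not available there; the paper handles this by citing \cite[Thm.\ 3.1]{murtypatankar} for the CM case and then assuming $r>1$.

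Second, and more substantively: you correctly isolate the ``integral refinement step'' as the crux, but the tool you reach for --- a hypothetical unitary analogue of \cite[Thm.\ B]{banaszaketal06} --- is not what the paper uses, and no such analogue is invoked. Instead, once Vasiu's theorem gives that $H_{X/K',\rat_\ell}$ sits between $G^\der(\rat_\ell)$ and $G(\rat_\ell)$, the paper appeals to Larsen's maximality theorem \cite[Thm.\ 3.17]{larsenmax}, which (roughly) says that for $\ell$ in a set of density one the mod-$\ell$ image surjects onto the adjoint group; combined with Lemma \ref{lemgptheory} this forces $H_{X/K',\ell}\supseteq G^\der(\integ/\ell)$. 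This is the missing ingredient in your argument: it converts the $\rat_\ell$-level Mumford--Tate information into the mod-$\ell$ statement needed for \eqref{eqgoursat}, without any type-IV-specific integral analysis. With Larsen's theorem in hand your proof goes through; without it, the step you flag as an obstacle remains genuinely open in your write-up.
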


\begin{proof}
Since Murty and Patankar have proved this result for abelian varieties
of CM type \cite[Thm.\ 3.1]{murtypatankar}, we assume that $2 \dim
X/[E:\rat] > 1$.  Let $K'/K$ be a finite extension such that
$(X/K,K')$ satisfies \eqref{eqfieldcond}.  By \cite[Thm.\ 1.3.4]{vasiumt}, the Mumford-Tate conjecture is
true for each representation $\rho_{X/K,\rat_\ell}\rest{\gal(K')}$.  More
precisely, there is a group $G/\integ[1/\Delta]$ of type
\eqref{caseimag} such that for almost all $\ell$, the Zariski closure
of $H_{X/K',\rat_\ell}$ is isomorphic to a subgroup of $G(\rat_\ell)$
which contains $G^\der(\rat_\ell)$.  By
\cite[Thm.\ 3.17]{larsenmax} and Lemma \ref{lemgptheory}, there is a
set of primes $\ELL$ of density one such 
that for $\ell \in \ELL$, $H_{X/K',\ell}$ is of type
$G(\integ/\ell)$.  Moreover, $X_\ell$ is the natural representation of
$H_{X/K',\ell}$.  Since the groups $G(\integ/\ell)$ satisfy Goursat's 
lemma (Lemma \ref{lemgoursat}), $S^*(X/K)$ has density one by
Lemma \ref{lemcexists} and Proposition \ref{propdensitySstarone}.
\end{proof}

Via the Torelli functor, these results yield information about curves.  For example,
consider the following condition on a curve $C$ over a field $k$:

\begin{equation}
\label{eqcurvecond}
\text{If $C \ra D$ is finite of degree at least $2$, then $D$ has genus zero.}
\end{equation}

If the Jacobian $\jac(C)$ is simple, then $C$ satisfies
\eqref{eqcurvecond}.  The converse is true if the genus of $C$ is at most
$6$, since almost every principally polarized abelian variety is a
Jacobian in dimension at most $3$.

\begin{corollary}
\label{corcurve}
Let $C/K$ be a curve of genus of odd prime genus $g$ over a number
field such that $C/\bar K$ satisfies
\eqref{eqcurvecond}.  Suppose that either $g\in \st{3,5}$ or that
$\jac(C)$ is absolutely simple. For almost all primes $\idp$,
$C_\idp/\kappa(\idp)$ satisfies \eqref{eqcurvecond}.
\end{corollary}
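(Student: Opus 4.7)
The plan is to apply Theorem \ref{threalaction} or Theorem \ref{thimagaction} to $X = \jac(C)$, and then translate absolute simplicity of $X_\idp$ back into the curve condition \eqref{eqcurvecond}. Under the hypotheses, $\jac(C_{\bar K})$ is simple: either by direct assumption, or (if $g \in \st{3,5}$) via the remark preceding the corollary, which notes that \eqref{eqcurvecond} over $\bar K$ forces simplicity of the Jacobian in genera at most $6$. Thus $X$ is absolutely simple of dimension equal to the odd prime $g$.

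Next I invoke Albert's classification of $D(X) = \End_{\bar K}(X)\tensor\rat$ together with the primality of $g = \dim X$. In Type I, $D(X) = F$ is totally real with $[F:\rat] \mid g$, hence $r = g/[F:\rat] \in \st{g,1}$ is odd, and Theorem \ref{threalaction} applies. Types II and III demand $2[F:\rat]\mid g$ and are therefore excluded for odd $g$. In Type IV, the constraint $[F(X):\rat]\cdot d(X)^2\mid g$ forces $d(X)=1$ and $[F(X):\rat]\in\st{1,g}$, so $D(X) = E(X)$ with $[E:\rat]\in\st{2,2g}$. Either $r = 2g/[E:\rat] = g$ is prime, so that criterion (i) of Section \ref{subseccommutative} makes the $E$-action non-special, or $r=1$, the CM case, which is reduced within the proof of Theorem \ref{thimagaction} to \cite[Thm.\ 3.1]{murtypatankar}. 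In either subcase Theorem \ref{thimagaction} applies.

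In every case $X_\idp = \jac(C_\idp)$ is absolutely simple for almost every prime $\idp$ of good reduction. For such $\idp$, I deduce \eqref{eqcurvecond} by a standard Torelli argument: any finite morphism $\phi: C_\idp \ra D$ of degree at least $2$ over $\kappa(\idp)$ induces, after base change to $\overline{\kappa(\idp)}$, a surjection $\phi_*: \jac(C_\idp) \ra \jac(D)$. Absolute simplicity of $\jac(C_\idp)$ forces $\phi_*$ to be zero (yielding $g(D)=0$, as required) or an isogeny; the latter would give $g(D) = g \ge 3$, contradicting Riemann--Hurwitz for a degree-$\ge 2$ morphism of smooth projective curves of common genus at least $2$. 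Since genus is preserved under base change, $D$ itself has genus zero.

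The main obstacle is the Albert-classification bookkeeping to exclude noncommutative and higher-degree Type IV endomorphism algebras when $g$ is an odd prime; once this is in hand, the corollary follows mechanically from Theorems \ref{threalaction} and \ref{thimagaction} together with the Torelli step.
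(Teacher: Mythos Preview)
Your proof is correct and follows essentially the same approach as the paper's: reduce to absolute simplicity of $\jac(C)$, rule out noncommutative endomorphism algebras via the Albert classification constraints for odd prime dimension, and apply Theorem~\ref{threalaction} or~\ref{thimagaction}. You have simply spelled out what the paper compresses into two sentences---the divisibility bookkeeping excluding Types II, III and higher-$d$ Type IV, the check that $r$ is odd or prime, and the Riemann--Hurwitz step translating simplicity of $\jac(C_\idp)$ back into condition~\eqref{eqcurvecond}.
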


\begin{proof}
By hypothesis (and the preceding discussion), $\jac(C)$ is absolutely
simple.  A simple
abelian variety of odd prime dimension over a number field has
commutative endomorphism ring.  This endomorphism ring is totally real
or totally imaginary; and in the latter case, the action is not
special.  Now use  Theorem \ref{threalaction} or
\ref{thimagaction} as appropriate.
\end{proof}

In general, a curve $C$ which satisfies \eqref{eqcurvecond} need not have
reductions $C_\idp$ satisfying \eqref{eqcurvecond} for a dense, or even
infinite, set of
primes $\idp$.  Indeed, let $C$ be the second curve considered
in the introduction.  Then $\jac(C)$ is simple, thus $C$ satisfies
\eqref{eqcurvecond}; but for each prime $\idp$ of good reduction, $\jac(C_\idp
\cross \overline{\kappa(\idp)})$ dominates, and thus $C_\idp$ covers, an
elliptic curve.

\section{Noncommutative endomorphism ring}

Recall the definitions of $D(X)$ and $d(X)$ from Section \ref{subsecabvar}.

\begin{proposition}
\label{propnewnoncommutative}
Let $X/K$ be an absolutely simple abelian variety over a number
field.  Suppose that $\End_{\bar K}(X_{\bar K})$ is
noncommutative.
\begin{alphabetize}
\item Then $R(X/K)$, the set of primes $\idp$ such that $X_\idp$ is
  reducible, has positive density.
\item If $\End_K(X) = \End_{\bar K}(X)$ and $H_{X/K,\rat_\ell}$ is
  connected, then $R(X/K)$ has density one.
\end{alphabetize}
\end{proposition}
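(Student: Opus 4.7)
The plan is to combine the Tate-module decomposition supplied by Lemma \ref{lemgalrepsplits} with the divisibility conclusion of Lemma \ref{lemnewRall}. Since $\End_{\bar K}(X)$ is noncommutative the invariant $d(X) = \sqrt{[D(X):E(X)]}$ satisfies $d(X) \ge 2$, so whenever the hypotheses of Lemma \ref{lemgalrepsplits} are met I obtain a Galois-equivariant isomorphism $T_\ell(X) \tensor \rat \iso W_{\rat_\ell}^{\oplus d(X)}$ of multiplicity at least two. A preliminary step is to exhibit an infinite set $\ELL$ of rational primes $\ell$ that totally split $D(X)$: this requires $\ell$ to split completely in $E(X)$ and the Brauer class of $D(X)$ to become trivial at each place of $E(X)$ above $\ell$, which amounts to asking that $\ell$ split completely in a fixed splitting field of $D(X)$. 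Chebotarev produces such primes in positive density, and away from a finite exceptional set every such $\ell$ satisfies the requirement.

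For part (b), the hypotheses $\End_K(X) = \End_{\bar K}(X)$ and connectedness of $H_{X/K,\rat_\ell}$ are exactly what Lemma \ref{lemgalrepsplits} requires with $K' = K$. Fixing any $\ell \in \ELL$, the resulting $\gal(K)$-module isomorphism $T_\ell(X)\tensor\rat \iso W_{\rat_\ell}^{\oplus d(X)}$ has $d(X)\ge 2$, so for every prime $\idp\in M(X/K)$ with $\idp \nmid \ell$ and $\kappa(\idp)$ of prime cardinality, Lemma \ref{lemnewRall}(b) forces $X_\idp$ to be reducible. Since the primes of $K$ of residue degree one have (natural and Dirichlet) density one, $R(X/K)$ has density one.

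For part (a), use Lemma \ref{lemgoodfield} to choose a finite Galois extension $K'/K$ such that $(X/K,K')$ satisfies \eqref{eqfieldcond}. Then $\End_{K'}(X) = \End_{\bar K}(X)$ is noncommutative and $H_{X/K',\rat_\ell}$ is connected for every $\ell$, so Lemma \ref{lemgalrepsplits} applied to $X/K'$ produces, for each $\ell \in \ELL$, a $\gal(K')$-module decomposition $T_\ell(X)\tensor\rat \iso W_{\rat_\ell}^{\oplus d(X)}$. This is precisely the input needed for Proposition \ref{propnewdensityRpos}(a) with $d = d(X) \ge 2$, which then delivers a set of primes of $K$ of density at least $1/[K':K] > 0$ at which $X_\idp$ is isogenous to a $d(X)$-fold self-product and thus reducible.

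I do not anticipate a genuine obstacle: both parts are assembled directly from results already in place. The only mildly delicate point is the verification that $\ELL$ is infinite, which is handled by the Chebotarev argument above; once this is granted, part (b) is immediate from Lemma \ref{lemnewRall}(b) via a single prime $\ell$, and part (a) follows formally by descending from $K'$ to $K$ through Proposition \ref{propnewdensityRpos}(a).
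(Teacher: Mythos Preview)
Your proposal is correct and follows essentially the same route as the paper: you produce an infinite set $\ELL$ of primes totally splitting $D(X)$, apply Lemma \ref{lemgalrepsplits} to decompose $T_\ell(X)\tensor\rat$ as $W_{\rat_\ell}^{\oplus d(X)}$, and then use Lemma \ref{lemnewRall}(b) at degree-one primes for part (b), descending from $K'$ to $K$ for part (a). The only cosmetic difference is that the paper deduces (a) from (b) applied to $X/K'$ in one sentence, whereas you invoke Proposition \ref{propnewdensityRpos}(a), which packages exactly that descent.
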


\begin{proof}
Let $K'/K$ be a finite extension such that $(X/K,K')$ satisfies
\eqref{eqfieldcond}; such an extension exists by Lemma
\ref{lemgoodfield}.  The conclusion of (b) for $X_{K'}$ implies the
conclusion of (a) for $X$.  Therefore, it suffices to assume
$\End_K(X) = \End_{\bar K}(X)$ and that $H_{X/K,\rat_\ell}$ is
connected, and then prove
that $R(X/K) = M(X/K)$.

Consider the set $\ELL$ of
primes $\ell$ such that $\rat_\ell$ totally splits $D(X)$.  Note that $\ELL$
has positive density, and in particular is infinite.  Suppose $\ell
\in \ELL$.  By Lemma \ref{lemgalrepsplits}, there exists a
representation $W_{\rat_\ell}$ of $\gal(K)$ such that $T_\ell(X)
\tensor\rat \iso W_{\rat_\ell}^{\oplus d(X)}$ as $\gal(K)$-module.
Note that $d(X) > 1$ since $D(X)$ is noncommutative.  By Lemma
\ref{lemnewRall}, for $\idp$ in a subset of $M(X/K)$ of density one,
$X_\idp$ is isogenous to $Y_\idp^{\oplus d(X)}$ for some abelian 
variety $Y_\idp/\kappa(\idp)$.
\end{proof}

\begin{remark}
  In the special case of an abelian surface $X$ with action by an indefinite
  quaternion algebra, $X$ has absolutely split reduction at every
  prime of good reduction.  This is explained in detail in \cite[Sec.\
  2]{murtypatankar05}; see also \cite[Rem.\ 5.(ii)]{chaioort01}.  It
  is possible that Lemma \ref{lemcenterramifies} will yield a generalization of
  this. 
In   the context of Proposition \ref{propnewnoncommutative}, Murty and Patankar
  show \cite[Prop.\ 5.4]{murtypatankar} that $X_\idp$ is not simple at
  any prime $\idp$ of {\em ordinary} reduction.
\end{remark}

\begin{lemma}
\label{lemcenterramifies}
Let $X/K$ be an absolutely simple abelian variety over a number
field with noncommutative endomorphism algebra $D(X)$.  Let $\Delta$
be the product of all (finite) primes of $E(X)$ which 
ramify in $D(X)$.  Suppose $\idp \in M(X/K)$ is relatively prime to
$\Delta$.  Then $E(X_\idp)$ is ramified at every prime dividing
$\Delta$.
\end{lemma}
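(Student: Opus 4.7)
The plan is to localize at a prime $\mathfrak{l}\mid\Delta$ of $E(X)$ via the $\ell$-adic Tate module, where $\ell$ is the rational prime below $\mathfrak{l}$. The hypothesis that $\idp$ is relatively prime to $\Delta$ guarantees that $\ell$ differs from the residue characteristic of $\idp$, so $V_\ell(X_\idp) = T_\ell(X_\idp)\tensor_\integ\rat$ is available and Tate's theorem applies.

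The reduction map provides an embedding $D(X) \inject \End^0(X_\idp)$, which endows $V_\ell(X_\idp)$ with the structure of a module over $D(X)\tensor_\rat\rat_\ell \iso \prod_{v\mid\ell} D_v$, where $D_v = D(X)\tensor_{E(X)} E(X)_v$. I would isolate the $\mathfrak{l}$-component $V_\mathfrak{l}$: because $\mathfrak{l}\mid\Delta$, the algebra $D_\mathfrak{l}$ is a \emph{division} algebra over the local field $E(X)_\mathfrak{l}$, and $V_\mathfrak{l}$ becomes a faithful $D_\mathfrak{l}$-module of some rank $r$.

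Next, observe that the Frobenius $\pi_\idp$ commutes with the image of $D(X)$ in $\End^0(X_\idp)$, so it preserves each isotypic component $V_v$. Its restriction to $V_\mathfrak{l}$ therefore lies in the centralizer $\End_{D_\mathfrak{l}}(V_\mathfrak{l}) \iso \mat_r(D_\mathfrak{l}^{\rm op})$, a central simple $E(X)_\mathfrak{l}$-algebra Brauer-equivalent to $D_\mathfrak{l}^{\rm op}$ and in particular \emph{non-split}. Since $E(X_\idp) = \rat[\pi_\idp]$, the commutative $E(X)_\mathfrak{l}$-subalgebra $A_\mathfrak{l}$ generated by $\pi_\idp\rest{V_\mathfrak{l}}$ is precisely the image of $E(X_\idp)\tensor_\rat E(X)_\mathfrak{l}$ acting on $V_\mathfrak{l}$. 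As $E(X_\idp)/\rat$ is separable, $A_\mathfrak{l}$ decomposes as a product of local field extensions $L_j$ of $E(X)_\mathfrak{l}$, and these $L_j$ correspond to (and in fact coincide with) the completions of $E(X_\idp)$ at the primes lying above $\mathfrak{l}$.

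The main obstacle is the last step: turning ``each $L_j$ sits as a subfield of a non-split central simple $E(X)_\mathfrak{l}$-algebra Brauer-equivalent to $D_\mathfrak{l}^{\rm op}$'' into the required ramification statement for each $L_j/E(X)_\mathfrak{l}$. I expect this to come from the classification of subfields of central simple algebras over local fields, combined with the constraint that $\pi_\idp$ is a Weil $q$-number whose minimal polynomial and $\ell$-adic integrality strongly restrict the shape of $A_\mathfrak{l}$: an unramified factor $L_j$ would force enough of the Brauer class of $M_r(D_\mathfrak{l}^{\rm op})$ to die in an unramified extension to contradict the fact that $\mathfrak{l}\mid\Delta$ pins down a nontrivial invariant. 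Carrying out this last local Brauer-theoretic step is where the real work lies; the Tate module set-up in the first three paragraphs is the mechanical part.
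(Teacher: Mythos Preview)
Your route is considerably more elaborate than the paper's. The paper never unpacks the Tate module: it simply assumes $X_\idp$ is simple, notes that the reduction inclusion $D(X)\hookrightarrow D(X_\idp)$ forces $D(X_\idp)$ to be a noncommutative division algebra, and then invokes Tate's description of $\End(X_\idp)$ to conclude that $D(X_\idp)$ is split at every place of $E(X_\idp)$ not lying over the residue characteristic $p$ --- hence in particular at every place above a prime $\mathfrak{l}\mid\Delta$, since by hypothesis $\idp$ is prime to $\Delta$. From there the paper passes directly to the ramification claim. Your computation of the centralizer $\End_{D_{\mathfrak l}}(V_{\mathfrak l})\cong M_r(D_{\mathfrak l}^{\rm op})$ is correct, but it just reconstructs by hand the same local splitting constraint that the paper reads off in one line from Tate's theorem on the invariants of $D(X_\idp)$; the machinery of decomposing $V_\ell$ along $E(X)\otimes\rat_\ell$ buys you nothing extra here.

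The step you flag as ``where the real work lies'' is indeed the crux, and your sketch does not close it. Embedding a commutative \'etale algebra $\prod_j L_j$ into a central simple $E(X)_{\mathfrak l}$-algebra Brauer-equivalent to $D_{\mathfrak l}^{\rm op}$ does \emph{not} by itself force any $L_j/E(X)_{\mathfrak l}$ to be ramified: over a non-archimedean local field every central division algebra is already split by the unramified extension of the appropriate degree, so an unramified $L_j$ of that degree sits perfectly well inside $M_r(D_{\mathfrak l}^{\rm op})$. The paper's own justification at this point is the assertion that ``only a ramified field extension splits a division algebra over a local field,'' which is the opposite of the standard fact just recalled, so you should treat this step with care rather than assume it is routine. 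To genuinely force ramification (as opposed to mere non-splitness of $\mathfrak{l}$ in $E(X_\idp)$) you would need input beyond abstract Brauer theory --- for instance a comparison of $d(X_\idp)$ with $d(X)$, or a sharper exploitation of the Weil-number relation $\pi\bar\pi=q$ than you have indicated.
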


\begin{proof}
Suppose $X_\idp$ is simple, and let $p$ be the characteristic of
$\kappa(\idp)$.  The inclusion $\End(X)\inject
\End(X_\idp)$ forces $D(X_\idp)$ to be noncommutative.  By
\cite[Thm. 2(e)]{tateendff}, $D(X_\idp)$ is split at all primes not
dividing $p$.  In particular, $D(X_\idp)$ is split at all primes not
dividing $\Delta$.  Since only a ramified field extension splits a
division algebra over a local field, $E(X_\idp)$ must ramify at all
primes dividing $\Delta$.
\end{proof}

If the endomorphism ring of $X$ is an indefinite quaternion algebra,
one knows more about the structure of the reductions $X_\idp$:

\begin{theorem}
\label{thindefquat}
Let $X/K$ be an absolutely simple abelian variety over a number field.  Suppose that 
$\End_{\bar K}(X_{\bar K}) \tensor \rat$ is an indefinite
quaternion algebra over a totally real field $F$.  If $\dim X/2[F:\rat]$ is odd,
then for $\idp$ in a set of positive density, $X_\idp$ is geometrically isogenous to the self-product of an
absolutely simple abelian variety $Y_{\bar\idp}/\overline{\kappa(\idp)}$ of
dimension $(\dim X)/2$.
\end{theorem}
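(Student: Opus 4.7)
The plan is to set up the hypotheses of Proposition~\ref{propnewdensityRpos}(b) with $d=2$, paralleling the proofs of Theorems~\ref{threalaction} and~\ref{thimagaction}. First I would apply Lemma~\ref{lemgoodfield} to pass to a finite Galois extension $K'/K$ with $(X/K,K')$ satisfying~\eqref{eqfieldcond}; in particular $H_{X/K',\rat_\ell}$ is connected for every $\ell$, and $\End_{K'}(X)\tensor\rat\iso D$, an indefinite quaternion algebra over $F$ with invariants $d(X)=2$ and $e(X)=f:=[F:\rat]$. Write $r=\dim X/(2f)$, which is odd by hypothesis.

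The next step is to exhibit the isotypic decomposition $V(X)_{\rat_\ell}\iso W_{\rat_\ell}^{\oplus 2}$ of $\gal(K')$-modules. Let $\Delta_0$ be the product of rational primes below ramified places of $D$. For any $\ell\nmid\Delta_0$ inert in $F$, one has $D\tensor\rat_\ell\iso M_2(F_\ell)$, where $F_\ell/\rat_\ell$ is the unramified extension of degree $f$; the $D\tensor\rat_\ell$-action on $V(X)_{\rat_\ell}$ then forces (by Morita) a decomposition $V(X)_{\rat_\ell}\iso W_{\rat_\ell}^{\oplus 2}$ with $W_{\rat_\ell}$ of $\rat_\ell$-dimension $g=\dim X$ and carrying a compatible $F_\ell$-structure. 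The isomorphism is Galois-equivariant because the Galois action commutes with the $D$-action. This is the analogue of Lemma~\ref{lemgalrepsplits} in the situation where $\rat_\ell$ splits $D$ only locally, rather than totally.

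Now take $G=\res_{\calo_F[1/\Delta]/\integ[1/\Delta]}\gsp_{2r}$, a group of type~\eqref{casereal} with $Z=F$, whose natural representation realizes $W_{\rat_\ell}$. A Banaszak--Gajda--Kraso\'n-style open-image theorem for type II abelian varieties, in the spirit of~\cite{banaszaketal06}, identifies $H^\der_{X/K',\ell}$ with $G^\der(\integ/\ell)=\sp_{2r}(\calo_F/\ell)$ for all $\ell\gg 0$; here the parity assumption on $r$ is where the argument is delicate, since only for $r$ odd do the Hodge-theoretic considerations force the image of Galois to contain all of $G^\der$. Let $\ELL$ consist of those rational primes inert in $F$, prime to $\Delta\Delta_0$, at which this open-image conclusion holds. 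The Goursat condition demanded by~\eqref{eqgoursat} then follows from Lemma~\ref{lemgoursat}, and Lemma~\ref{lemcexists} supplies a constant $C<1$ with $\abs{\isotropic_\ell(H_\ell)}/\abs{H_\ell}<C$ uniformly for $\ell\in\ELL$.

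Finally, for such $\ell$ the maximal anisotropic torus $T^\an_\ell\subset G(\integ/\ell)$ acts irreducibly on its natural module (case~\eqref{casereal} of Section~\ref{subsecgroup}); Chebotarev applied to $\rho_{X/K',\ell}$, combined with $G^\der(\integ/\ell)\subseteq H_{X/K',\ell}$, supplies $\sigma\in\gal(K')$ whose reduction lies in the regular locus of $T^\an_\ell$, and the standard mod-$\ell$ lifting argument shows $\rho_{X/K',\integ_\ell}(\sigma)$ acts irreducibly and semisimply on $W_{\rat_\ell}$. Proposition~\ref{propnewdensityRpos}(b) with $d=2$ then yields a set of primes $\idp$ of density at least $1/[K':K]$ at which $X_\idp\cross\overline{\kappa(\idp)}\twiddle Y_{\bar\idp}^{\oplus 2}$ for an absolutely simple $Y_{\bar\idp}$, and the dimension count $\dim Y_{\bar\idp}=(\dim X)/2$ is automatic. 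The main obstacle is the type II open-image input: establishing that the odd-rank hypothesis is exactly what the BGK-style methods need to force $H^\der_{X/K',\ell}=\sp_{2r}(\calo_F/\ell)$; once that is available, the remaining steps transcribe almost verbatim the arguments of Theorems~\ref{threalaction} and~\ref{thimagaction}.
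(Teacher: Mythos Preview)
Your proposal is correct and follows essentially the same route as the paper: pass to $K'$ via Lemma~\ref{lemgoodfield}, invoke the Banaszak--Gajda--Kraso\'n open-image theorem to obtain a group $G$ of type~\eqref{casereal} with $(X/K',G,\ELL)$ satisfying~\eqref{eqgoursat}, use Lemmas~\ref{lemcexists} and~\ref{lemgoursat}, and conclude with Proposition~\ref{propnewdensityRpos}(b). The only cosmetic difference is that the paper cites \cite[Thm.~B and Thm.~5.4]{banaszaketal06} directly for both the image computation and the integral decomposition $X_\ell\iso(W\oplus W)\tensor\integ/\ell$, whereas you rederive the $\rat_\ell$-level splitting by hand via Morita and spell out the existence of the irreducibly-acting $\sigma$; neither changes the argument.
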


\begin{proof}
Let $K'/K$ be a finite extension of $K$ such that $(X/K,K')$ satisfies
\eqref{eqfieldcond}. 
By \cite[Thm.\ B]{banaszaketal06}, 
Lemma \ref{lemcexists} and Lemma \ref{lemgoursat}, there exists a
group $G/\integ[1/\Delta]$ of type \eqref{casereal} and an infinite
set of primes $\ELL$ such that $(X/K', G/\integ[1/\Delta], \ELL)$
satisfies \eqref{eqgoursat}.  Moreover, there is a $G$-module
$W/\integ[1/\Delta]$  such that for all $\ell \in \ELL$, $W\tensor
\integ/\ell$ is an irreducible $G(\integ/\ell)$-module and $X_\ell
\iso (W\oplus W)\tensor \integ/\ell$ as $G(\integ/\ell)$-module.
(This is \cite[Thm.\ 5.4]{banaszaketal06}; see also \cite{chiIIsinica}
for the analogous statment for $\rat_\ell$-modules.)  The result now
follows from Proposition \ref{propnewdensityRpos}.
\end{proof}

\section*{Acknowledgements}

My debt to the work of Murty and Patankar is obvious, and I thank
V.\ Kumar Murty for sharing \cite{murtypatankar} with me in preprint
form.   It's also a pleasure to acknowledge helpful discussions with Bob Guralnick, Tim Penttila, Rachel Pries and David
Zywina, as well as comments from the referee.

\bibliographystyle{hamsplain}
\bibliography{jda}

\end{document}